\theoremstyle{cupthm}
\newtheorem{thm}{Theorem}[section]
\newtheorem{prop}[thm]{Proposition}
\newtheorem{cor}[thm]{Corollary}
\newtheorem{lemma}[thm]{Lemma}
\theoremstyle{cupdefn}
\theoremstyle{cuprem}
\numberwithin{equation}{section}
\begin{document}
\runningtitle{$I$-primary}  %Short title for running head (top of right hand page)
\title{$I$-primary submodules}
%% If there is more than one author, put \cauthor immediately before
%% the corresponding author.
%\cauthor %% mark the next author as corresponding author
\author[1]{Ismael Akray}
\address[1]{Department of Mathematics, Soran University, Erbil city, Kurdistan region, Iraq\email{ismael.akray@soran.edu.iq}}
%% If there are several authors, list them here
\author[2]{Halgurd S. Hussein}
\address[2]{Department of Mathematics, Soran University, Erbil city, Kurdistan region, Iraq\email{rasty.rosty1@gmail.com}}

%% List the authors, initials and surnames only, for the
%% running head (left hand page)
\authorheadline{I. Akray and H. S. Hussein}

%% If there is a dedication, include it here
%\dedication{Dedicated to ...}

%\support{Include acknowledgement of support here}

\begin{abstract} % no more than 150 words
   \hspace{.3cm} In this paper, we give a generalization for weakly primary submodules called $I$-primary submodule and we study some properties of it. We give some characterizations of $I$-primary submodules. Also we establish the situation of $I$-primary submodules in module localizations and decomposable modules.
\end{abstract}

%% - subject classification and keywords
%% 2010 American Mathematical Society Subject Classification
%% Provide only ONE primary classification
\classification{primary 13A15; secondary 13C99; 13F05}
%% Four or five keywords or phrases
\keywords{Primary ideal, primary submodule, weakly primary submodule}

\maketitle

\section{Introduction}

\hspace{.3cm} Throughout this paper $R$ will be a commutative ring with nonzero identity and $I$ a fixed ideal of $R$ and $M$ a unitary left $R$-module. The concept of weakly prime ideals was introduced by Anderson and Smith (2003), where a proper ideal $P$ is called weakly prime if, for $a,b \in R$ with $0 \neq ab \in P$, either $a \in P$ or $b \in P$, \cite{6-6}. The radical of an ideal $I$ of $R$ is defined to be the set of all $a \in R$ for which $a^n \in I$ for some positive integer $n$. Primary ideals have an important role in commutative ring theory. A proper ideal $Q$ of $R$ is said to be primary provided that for $a,b \in R, ab \in Q$ implies that either $a \in Q$ or $b \in \sqrt{Q}$. We can generalize the concept of primary ideals by restricting the set where $ab$ lies. A proper ideal $Q$ of $R$ is weakly primary if for $a,b \in R$ with $0 \neq ab \in Q$, either $a \in Q$ or $b \in \sqrt{Q}$. Weakly primary ideals were first introduced and studied by Ebrahimi Atani and Farzalipour in 2005, \cite{5-5}.

An $R$-module $M$ is called a multiplication module if every submodule $N$ of $M$ has the form $IM$ for some ideal $I$ of $R$, see \cite{34}. Note that, since $I \subseteq (N :_R M)$ then $N = IM \subseteq (N :_R M)M \subseteq N$. So that $N = (N :_R M)M$. Let $N$ and $K$ be submodules of a multiplication $R$-module $M$ with $N = I_1M$ and $K = I_2M$ for some ideals $I_1$ and $I_2$ of $R$. The product of $N$ and $K$ denoted by $NK$ is defined by $NK = I_1I_2M$. Then by \cite[Theorem 3.4]{123}, the product of $N$ and $K$ is independent of presentations of $N$ and $K$. An $R$-module $M$ is called faithful if it has zero annihilator. Let $N$ be a proper submodule of a nonzero $R$-module $M$. Then the $M$-radical of $N$, denoted by $M-rad(N)$, is defined to be the intersection of all prime submodules of $M$ containing $N$. If $M$ has no prime submodule containing $N$, then we say $M-rad(N) = M$. It is shown in \cite[Theorem 2.12]{1234} that if $N$ is a proper submodule of a multiplication $R$-module $M$, then $M-rad(N) = \sqrt{(N :_R M)}M$. 

The class of prime submodules of modules was introduced and studied in 1992 as a generalization of the class of prime ideals of rings. Then, many generalizations of prime submodules were studied such as primary, classical prime, weakly prime and classical primary submodules, see \cite{34,124, 134, 164} and \cite{114}. The authors in \cite{11-1} and \cite{22-2} introduced the notions $I$-prime ideals and $I$-prime submodules. A proper ideal of $R$ is called $I$-prime ideal of $R$ if $rs \in P-IP $ for all $r, s \in R$ implies that either $r \in P$ or $s \in P$. A proper submodule $P$ of $M$ is called $I$-prime submodule of $M$ if $rm \in P-IP $ for all $r \in R$ and $m \in M$ implies that either $m \in P$ or $r \in (P:M)$. In this article, we define and study $I$-primary submodules which are generalizations of weakly primary (and weakly prime) submodules. We generalize some basic properties of weakly prime and weakly primary to $I$-primary submodules. We give some characterizations of $I$-primary submodules.

\section{Main results}

Let $I$ be an ideal of $R$ and $M$ an $R$-module. A proper submodule $P$ of $M$ is called an $I$ -primary submodule of $M$ if $rm \in P-IP $ for all $r \in R$ and $m \in M$ implies that either $m \in P$ or $r \in \sqrt{(P:M)}$. In view of the above definition, in all what follows, $I$ is an ideal of $R$.
Now we begin with the following result.

		\begin{thm}
			Let $M$ be an $R$-module. Let $P$ be an $I$-primary submodule of $M$. If $(P:M)P \nsubseteq IP$, then $P$ is a primary submodule of $M$. 
		\end{thm}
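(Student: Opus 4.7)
The plan is to show that under the hypothesis $(P:M)P \nsubseteq IP$, the $I$-primary property of $P$ already forces the full primary condition. So I would take an arbitrary $r \in R$, $m \in M$ with $rm \in P$, and try to conclude that $m \in P$ or $r \in \sqrt{(P:M)}$. If $rm \notin IP$, this is immediate from the definition of $I$-primary; the substance of the proof is the case $rm \in IP$. Here I would argue by contradiction, assuming simultaneously that $m \notin P$ and $r \notin \sqrt{(P:M)}$.

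The technique is the standard "perturbation trick": produce an element whose product is in $P$ but escapes $IP$, so that the $I$-primary hypothesis can be applied nontrivially. I would split into cases according to whether $rP$ and $(P:M)m$ are contained in $IP$.

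\textbf{Case 1.} If $rP \nsubseteq IP$, pick $p \in P$ with $rp \notin IP$. Then $r(m+p) = rm + rp$ lies in $P$ but, because $rm \in IP$ and $rp \notin IP$, outside $IP$. Applying the $I$-primary hypothesis yields $m+p \in P$ or $r \in \sqrt{(P:M)}$; the first alternative forces $m \in P$, contradicting our assumption.

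\textbf{Case 2.} If $rP \subseteq IP$, I would split further. If $(P:M)m \nsubseteq IP$, pick $s \in (P:M)$ with $sm \notin IP$; then $(r+s)m \in P - IP$ gives $m \in P$ or $r+s \in \sqrt{(P:M)}$, and using $s \in (P:M) \subseteq \sqrt{(P:M)}$ in the latter produces $r \in \sqrt{(P:M)}$, a contradiction. If instead $(P:M)m \subseteq IP$, I would prove the stronger claim $(P:M)P \subseteq IP$, which directly contradicts the hypothesis. For this, take any $s \in (P:M)$ and $q \in P$ and expand
\[
(r+s)(m+q) = rm + rq + sm + sq.
\]
All four summands lie in $P$, so $(r+s)(m+q) \in P$. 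Three of them — $rm$, $rq \in rP$, and $sm$ — lie in $IP$ by our case assumptions, so $(r+s)(m+q) \in IP$ iff $sq \in IP$. If $sq \notin IP$, applying the $I$-primary hypothesis to $(r+s)(m+q)$ again gives, via the same $s \in \sqrt{(P:M)}$ argument and $q \in P$, the contradiction $m \in P$ or $r \in \sqrt{(P:M)}$. Hence $sq \in IP$ for all $s \in (P:M)$ and $q \in P$, establishing the claim.

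The main obstacle is organizing the case split cleanly so that each perturbation element lands in $P$ but strictly outside $IP$; the key algebraic input is the trivial but essential fact $(P:M) \subseteq \sqrt{(P:M)}$, which is what lets me absorb the added element $s$ when the $I$-primary hypothesis fires.
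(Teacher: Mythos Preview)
Your proposal is correct and follows essentially the same perturbation strategy as the paper's own proof: handle $rm \notin IP$ directly, then reduce to the situation $rP \subseteq IP$ and $(P:M)m \subseteq IP$ via the shifts $m \mapsto m+p$ and $r \mapsto r+t$, and finally use the double shift $(r+s)(m+q)$ to reach a contradiction with $(P:M)P \nsubseteq IP$. The only cosmetic difference is that in the last step you phrase it as proving $(P:M)P \subseteq IP$ for all $s,q$, whereas the paper picks a single witness $bp \notin IP$ from the hypothesis and applies the $I$-primary condition to $(r+b)(m+p)$ directly; these are contrapositives of one another.
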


	\begin{proof}
		Suppose that $r \in R$ and $m \in M$ such that $rm \in P$. If $rm \notin IP$, since $P$ is $I$-primary submodule of $M$, then $r^{n} \in (P:M)$ for some positive integer $n$ or $m \in P$. Now suppose that $rm \in IP$. Let $rP \subseteq IP$. Because if $rP \nsubseteq IP$, then there exists $p \in P$ such that $rp \notin IP$, then $r(m+p) \in P-IP$. Therefore $r^{n} \in (P:M)$ for some positive integer $n$ or $m+p \in P$. Hence $r^n \in (P:M)$ for some positive integer $n$ or $m \in p$. Now suppose that $(P:M)m \subseteq IP$. Because if $(P:M)m \nsubseteq IP$, then there exists $t \in (P:M)$ such that $tm \notin IP$ and so $(r+t)m \in P-IP$. Then we have $r+t \in \sqrt{(P:M)}$ or $m \in P$ . Hence $r \in \sqrt{(P:M)}$ or $m \in P$. Since $(P:M)P \nsubseteq IP$, so there exist $b \in (P:M)$ and $p \in P$ such that $bp \notin IP$. Then $(r+b)(m+p) \in P-IP$. Therefore $r+b \in \sqrt {(P:M)}$ or $m+p \in P$. Hence $ r \in \sqrt 
		{(P:M)}$ or $m \in P$. Thus $P$ is primary submodule in $M$.
	\end{proof}

		\begin{cor}
			Let $P$ be a $0$-primary of $M$ such that $(P:M)P  \neq 0$. Then $P$ is a primary submodule of $M$.
		\end{cor}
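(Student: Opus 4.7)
The plan is to observe that this corollary is an immediate specialization of the preceding Theorem to the case $I = 0$. First I would unpack what it means for $P$ to be a $0$-primary submodule of $M$: by the general definition of $I$-primary with $I = 0$, the condition $rm \in P - IP$ becomes $rm \in P - \{0\}$, i.e., $0 \neq rm \in P$, so $0$-primary is exactly the notion of weakly primary submodule referenced in the introduction.

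Next I would rewrite the side hypothesis of the Theorem with $I = 0$. The condition $(P:M)P \nsubseteq IP$ becomes $(P:M)P \nsubseteq 0 \cdot P = \{0\}$, which is literally the assumption $(P:M)P \neq 0$ given in the corollary. Thus both hypotheses of the Theorem are verified.

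Applying the Theorem then yields that $P$ is a primary submodule of $M$, and nothing further needs to be said. There is no real obstacle here; the corollary is a direct instantiation, so the only thing to be careful about is correctly identifying $IP$ with the zero submodule so that the non-containment condition collapses to the stated non-triviality hypothesis $(P:M)P \neq 0$.
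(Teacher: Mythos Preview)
Your proposal is correct and matches the paper's intent: the corollary is stated in the paper without proof immediately after the theorem, precisely because it is the direct specialization $I=0$ that you describe. Your identification of $IP=\{0\}$ and the resulting equivalence $(P:M)P\nsubseteq IP \iff (P:M)P\neq 0$ is exactly the observation needed.
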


		\begin{cor}
			Let $P$ be an $I$-primary submodule of $M$ such that $IP \subseteq (P:M)^{2}P$ and that $P$ is not primary submodule. Then $IP =(P:M)^{2}P$.
		\end{cor}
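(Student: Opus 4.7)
The strategy is to prove the reverse inclusion $(P:M)^{2}P \subseteq IP$, since the forward inclusion is already given by hypothesis. The only real tool we have at this point is the previous theorem (Theorem 2.1), so the plan is to invoke it in contrapositive form and then to upgrade the resulting relation by one factor of $(P:M)$.

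First I would apply Theorem~2.1 in contrapositive: since $P$ is an $I$-primary submodule of $M$ that fails to be primary, Theorem~2.1 forces $(P:M)P \subseteq IP$. This single step captures essentially all the structural content of the hypothesis ``$P$ is not primary''; everything afterwards is a formal manipulation of ideal--submodule products.

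Next I would multiply this containment on the left by the ideal $(P:M)$. Associativity of the ideal/submodule product gives
\[
 (P:M)^{2}P \;=\; (P:M)\cdot\bigl((P:M)P\bigr) \;\subseteq\; (P:M)\cdot IP.
\]
Then I would observe that $(P:M)\cdot IP \subseteq IP$: for any generator $a(ip)$ with $a\in(P:M)$, $i\in I$, $p\in P$, one has $ap\in P$ (because $aM\subseteq P$), and therefore $a(ip)=i(ap)\in IP$. Hence $(P:M)^{2}P \subseteq IP$.

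Finally I would combine this with the standing hypothesis $IP \subseteq (P:M)^{2}P$ to conclude $IP=(P:M)^{2}P$. There is no real obstacle here; the only subtlety is recognizing that the hypothesis ``$P$ is not primary'' is used exclusively through Theorem~2.1, and that the jump from $(P:M)P\subseteq IP$ to $(P:M)^{2}P\subseteq IP$ is just one application of $(P:M)$ on the left together with the absorption $(P:M)\cdot IP \subseteq IP$.
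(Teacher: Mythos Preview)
Your proof is correct and follows exactly the route the paper intends (the corollary is stated without proof there): apply Theorem~2.1 in contrapositive to get $(P:M)P\subseteq IP$, then pass to $(P:M)^2P\subseteq IP$ and combine with the hypothesis. Your step through $(P:M)\cdot IP\subseteq IP$ is fine, though you can shorten it by simply using $(P:M)^2\subseteq (P:M)$, which gives $(P:M)^2P\subseteq (P:M)P\subseteq IP$ directly.
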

	
	In what follows we give some charactrizations for $I$-primary submodules.
	
	\begin{thm} \label{1}
		Let $P$ be a proper submodule of $M$, then the following are equivalent:
		\\(1) $P$ is an $I$-primary submodule of $M$.
		\\(2)For $r  \in R- \sqrt{(P:M)},\ (P:r)=P \cup (IP:r)$.
		\\(3) For $r  \in R- \sqrt{(P:M)},\ (P:r)=P$ or $(P:r)=(IP:r)$.
	\end{thm}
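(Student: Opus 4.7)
The plan is to prove the cycle $(1)\Rightarrow(2)\Rightarrow(3)\Rightarrow(1)$, where $(P:r)$ denotes $\{m\in M:rm\in P\}$.

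For $(1)\Rightarrow(2)$, fix $r\in R-\sqrt{(P:M)}$. Both $P\subseteq (P:r)$ (since $P$ is a submodule containing elements $m$ with $rm\in P$) and $(IP:r)\subseteq(P:r)$ (since $IP\subseteq P$) are immediate, giving the inclusion $P\cup(IP:r)\subseteq (P:r)$. For the reverse inclusion I would pick $m\in(P:r)$, split on whether $rm\in IP$ or not. If $rm\in IP$ we land in $(IP:r)$; if $rm\in P-IP$ the $I$-primary hypothesis forces $m\in P$ or $r\in\sqrt{(P:M)}$, and the latter is excluded by choice of $r$.

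For $(2)\Rightarrow(3)$, I would invoke the standard lemma that an additive subgroup (here $(P:r)$, which is a submodule of $M$) which is the set-theoretic union of two subgroups (here $P$ and $(IP:r)$) must coincide with one of them. This step is short but is the only place one genuinely uses that all three objects are submodules; it is the only mildly non-formal step in the cycle.

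For $(3)\Rightarrow(1)$, suppose $rm\in P-IP$ with $r\notin\sqrt{(P:M)}$. Applying the hypothesis to this $r$ gives either $(P:r)=P$ or $(P:r)=(IP:r)$. Since $m\in(P:r)$, in the first case $m\in P$ as required, while in the second case $rm\in IP$, contradicting $rm\notin IP$. Thus $m\in P$, verifying the defining condition.

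I do not expect any real obstacle: $(1)\Leftrightarrow(2)$ is a direct unwinding of the definition, $(2)\Rightarrow(3)$ is the union-of-submodules trick, and $(3)\Rightarrow(1)$ is immediate from the case analysis. The one place to be careful is making sure the characterizations are only claimed for $r\notin\sqrt{(P:M)}$, since otherwise the $I$-primary condition on $rm\in P-IP$ would be vacuously satisfied by the radical condition on $r$ and would carry no information about $(P:r)$.
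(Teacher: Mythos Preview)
Your proof is correct and follows essentially the same route as the paper: the same cycle $(1)\Rightarrow(2)\Rightarrow(3)\Rightarrow(1)$, with the same case split on whether $rm\in IP$ for $(1)\Rightarrow(2)$, the same union-of-submodules observation for $(2)\Rightarrow(3)$, and the same contradiction argument for $(3)\Rightarrow(1)$. The only difference is that you spell out the union-of-submodules lemma explicitly, whereas the paper simply notes that $(P:r)$ is a submodule.
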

	\begin{proof}
		(1)$\Rightarrow$ (2) Suppose that $P$ is an $I$-primary submodule of $M$ such that $r \notin \sqrt{(P:M)}$. Let $a \in (P:r)$. So $ra \in P$. If $ra \notin IP$, then $a \in P$. Because $P$ is an $I$-primary submodule in $M$. If $ra \in IP$, then $a \in (IP:r)$. So $(P:r) \subseteq P \cup (IP:r)$. Now since $IP \subseteq P$, the other inclusion is hold.
		\\
		(2)$\Rightarrow$ (3) It is clear because $(P:r)$ is a submodule of $M$.
		\\
		(3)$\Rightarrow$ (1) Let $r \in R$ and $x \in M$ such that $rx \in P-IP$. If $r \notin \sqrt{(P:M)}$, then by assumption, either $(P:r)=P$ or $(P:r)=(IP:r)$. Since $rx \notin IP$ then $x \notin (IP:r)$, and since $rx \in P$, then $x \in (P:r)$. So $(P:r)=P$. Therefore $x \in P$. Thus $P$ is an $I$-primary submodule of $M$. 
	\end{proof}
	
	The quotient and localization of primary submodules are again primary submodules. But in case of $I$-primary submodules, we give a condition under which the localization becomes true as we see in the following theorem.

	\begin{thm}
		Let $M$ be an $R$-module. Let $P$ be an $I$-primary submodule of $M$. Then,
		\\(i) If $N \subseteq P$ is a submodule of $M$, then $ \frac{P}{N}$ is an $I$-primary submodule of $\frac{M}{N}$.
		\\(ii) Suppose that $S$ is a multiplicatively closed subset of $R$ such that $S^{-1}P \neq S^{-1}M$ and $S^{-1}(IP) \subseteq (S^{-1}I)(S^{-1}P)$. Then $S^{-1}P$ is an $(S^{-1}I)$-primary submodule of $S^{-1}M$.
	\end{thm}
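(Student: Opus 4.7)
For part (i), the plan is to take an arbitrary element $r(m+N) \in (P/N) - I(P/N)$ and unwind the two memberships. The containment $r(m+N) \in P/N$ translates to $rm \in P$, while the condition $r(m+N) \notin I(P/N) = (IP+N)/N$ together with $N \subseteq P$ forces $rm \notin IP$ (otherwise $rm \in IP \subseteq IP+N$). Thus $rm \in P - IP$, and the $I$-primary hypothesis on $P$ yields either $m \in P$ (hence $m+N \in P/N$) or $r^n \in (P:_R M)$ for some $n$. In the latter case $r^n(M/N) \subseteq P/N$, so $r^n \in (P/N :_R M/N)$, giving $r \in \sqrt{(P/N :_R M/N)}$. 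Properness of $P/N$ follows from properness of $P$. This part is essentially bookkeeping and I do not expect obstacles.

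For part (ii), I would begin by taking $(r/s)(m/t) \in S^{-1}P - (S^{-1}I)(S^{-1}P)$, with the aim of deriving either $m/t \in S^{-1}P$ or $r/s \in \sqrt{(S^{-1}P :_{S^{-1}R} S^{-1}M)}$. The containment $rm/(st) \in S^{-1}P$ gives some $u \in S$ with $urm \in P$. The central step is to show $urm \notin IP$; here is exactly where the hypothesis $S^{-1}(IP) \subseteq (S^{-1}I)(S^{-1}P)$ enters: if $urm \in IP$ were true, then $rm/(st) = urm/(ust) \in S^{-1}(IP) \subseteq (S^{-1}I)(S^{-1}P)$, contradicting the choice of $(r/s)(m/t)$. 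So $(ur)m \in P - IP$.

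Applying the $I$-primary property of $P$ then gives either $m \in P$, which immediately yields $m/t \in S^{-1}P$, or $(ur)^n \in (P:_R M)$ for some $n \ge 1$. In the latter case, for any $m'/t' \in S^{-1}M$ we have $u^n r^n m' \in P$, and therefore
\[
\frac{r^n}{s^n} \cdot \frac{m'}{t'} = \frac{u^n r^n m'}{u^n s^n t'} \in S^{-1}P,
\]
showing $(r/s)^n \in (S^{-1}P :_{S^{-1}R} S^{-1}M)$ and thus $r/s \in \sqrt{(S^{-1}P :_{S^{-1}R} S^{-1}M)}$. Properness of $S^{-1}P$ in $S^{-1}M$ is guaranteed by the hypothesis $S^{-1}P \neq S^{-1}M$.

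The only real obstacle is the deduction $urm \notin IP$, and it is precisely the assumption $S^{-1}(IP) \subseteq (S^{-1}I)(S^{-1}P)$ that removes it; the rest of the argument is the standard mechanism of transferring radical and colon conditions through a localization.
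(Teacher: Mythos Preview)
Your proposal is correct and follows essentially the same route as the paper. For (i) both arguments unwind $r(m+N)\in P/N - I(P/N)$ to $rm\in P-IP$ and then push the conclusion back to the quotient; for (ii) both reduce the localized element to a product lying in $P-IP$ (you via a direct contradiction using $S^{-1}(IP)\subseteq (S^{-1}I)(S^{-1}P)$, the paper via the set identity $S^{-1}P-S^{-1}(IP)=S^{-1}(P-IP)$) and then apply the $I$-primary hypothesis on $P$. Your justification that $urm\notin IP$ is in fact more explicit than the paper's corresponding step.
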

	
	\begin{proof}
			(i) Let $r\in R$ and $m \in M$ such that $r(m+N)=rm+N \in \frac{P}{N}-I \frac{P}{N}$. Then $rm+N \in \frac{P-IP}{N}$. So $rm \in P-IP$. As $P$ is $I$-primary submodule, then either $r^{n} \in (P:M)$ for some positive integer $n$ or $m \in P$. Therefore $r^{n} \in (\frac{P}{N}:\frac{M}{N})$ for some positive integer $n$ or $m+N \in \frac{P}{N}$. Hence $\frac{P}{N}$ is $I$-primary submodule of $\frac{M}{N}$.
		
			(ii) For all $\frac{r}{s}\in S^{-1}R$ and $\frac{m}{t} \in S^{-1}M$, let $\frac{r}{s}.\frac{m}{t}=\frac{rm}{st} \in S^{-1}P-(S^{-1}I)(S^{-1}P) \subseteq S^{-1}P-S^{-1}(IP)=S^{-1}(P-IP)$. Then $\frac{rm}{st}= \frac{x}{u}$ for $x \in P-IP$ and $u \in S$. So for some $q \in S, qurm=qstx \in P-IP$. As $P$ is $I$-primary submodule , $(qru)^{n} \in (P:M)$ for some positive  integer $n $ or $m \in P$. So $\frac{(qur)^{n}}{(qus)^{n}}= (\frac {r}{s})^{n} \in S^{-1}(P:M)=(S^{-1}P:_{S^{-1}R}S^{-1}M)$ or $ \frac{m}{t} \in S^{-1}P$. Hence $S^{-1}P$ is $(S^{-1}I)$-primary submodule of $S^{-1}M$.
		\end{proof}

	\begin{prop}
		Let $M$ be an $R$-module and $P$ be a submodule of $M$.
		\\(i) If $I_{1} \subseteq I_{2}$. Then $P$ is $I_{1}$-primary implies $P$ is $I_{2}$-primary.
		\\(ii) If $P$ is $I$-prime then $P$ is $I$-primary.
	\end{prop}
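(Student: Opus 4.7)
The plan is to unwind the definitions in both parts; neither requires any serious argument, which is really the main point to flag: both statements are essentially tautological given the definition of $I$-primary submodule, and I expect no substantive obstacle.

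For part (i), I would start with an arbitrary $r \in R$ and $m \in M$ satisfying $rm \in P - I_2 P$. The key observation is that since $I_1 \subseteq I_2$ we have $I_1 P \subseteq I_2 P$, hence $P \setminus I_2 P \subseteq P \setminus I_1 P$. Therefore $rm \in P - I_1 P$, and applying the hypothesis that $P$ is $I_1$-primary produces either $m \in P$ or $r \in \sqrt{(P:M)}$, which is exactly what is needed to conclude that $P$ is $I_2$-primary.

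For part (ii), I would take $r \in R$ and $m \in M$ with $rm \in P - IP$. Since $P$ is $I$-prime, either $m \in P$ or $r \in (P:M)$. In the latter case the trivial inclusion $(P:M) \subseteq \sqrt{(P:M)}$ yields $r \in \sqrt{(P:M)}$, so $P$ is $I$-primary.

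The only thing worth being careful about is making sure each implication is phrased with respect to the correct ``bad set'' $IP$ or $I_i P$; once the set-theoretic inclusion $I_1 P \subseteq I_2 P$ is pointed out in (i), and the containment $(P:M) \subseteq \sqrt{(P:M)}$ is invoked in (ii), the proof is complete in two or three lines each.
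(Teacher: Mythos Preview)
Your proposal is correct and matches the paper's proof essentially line for line: for (i) the paper uses the same inclusion $P - I_2P \subseteq P - I_1P$ coming from $I_1 \subseteq I_2$, and for (ii) the paper simply declares the proof trivial, which is exactly the observation $(P:M) \subseteq \sqrt{(P:M)}$ you spelled out.
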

	
	\begin{proof}
		(1) Suppose that $P$ is $I_{1}$-primary.Let $r \in R, m \in M$ with $rm \in P-I_{2}P$. Since $I_{1} \subseteq I_{2},\ P-I_{2}P \subseteq P-I_{1}P$. Then $rm \in P-I_{1}P$. But $P$ is $I_{1}$-primary. So $r \in \sqrt{(P:M)}$ or $m \in P$. Thus $P$ is $I_{2}$-primary.
		\\ (2) The proof is trivial.
	\end{proof}

	An ideal $I$ is called radical if $I= \sqrt{I}$. In the following we give a condition under which $I$-prime and $I$-primary be equivalent.
	\begin{prop}
		Let $P$ be a proper submodule of an $R$-module $M$ such that $(P:M)$ is radical ideal. Then $P$ is $I$-primary if and only if $P$ is $I$-prime.
	\end{prop}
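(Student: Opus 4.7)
The plan is to observe that the definitions of $I$-prime and $I$-primary submodules differ only in the conclusion on the scalar $r$: the former asks $r\in(P:M)$ while the latter asks $r\in\sqrt{(P:M)}$. Since the hypotheses on $rm\in P-IP$ and on $m\in P$ are identical in the two definitions, the proposition should reduce to the trivial equality $(P:M)=\sqrt{(P:M)}$ provided by the radical assumption.

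First I would dispose of the easy direction by citing part~(ii) of the previous proposition: every $I$-prime submodule is $I$-primary, and this uses no hypothesis on $(P:M)$ whatsoever. So only the converse really needs to be written down.

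For the converse, assume $P$ is $I$-primary with $(P:M)$ radical. Pick any $r\in R$ and $m\in M$ with $rm\in P-IP$. By the $I$-primary property, either $m\in P$ or $r\in\sqrt{(P:M)}$. In the second case the radical hypothesis gives $\sqrt{(P:M)}=(P:M)$, hence $r\in(P:M)$. This is precisely the $I$-prime condition, so $P$ is $I$-prime.

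There is no substantive obstacle here; the statement is essentially a definitional unwrapping, and the only thing to verify is that the identification $\sqrt{(P:M)}=(P:M)$ is used in the correct direction. In particular, no manipulation of $IP$ or $P$ itself is needed, which is what makes this proposition short compared to Theorem~\ref{1}.
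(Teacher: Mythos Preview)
Your proposal is correct and is exactly what the paper intends: the paper's own proof is the single sentence ``The proof comes from the definitions,'' and you have simply written that out, using the radical hypothesis $\sqrt{(P:M)}=(P:M)$ to collapse the $I$-primary conclusion into the $I$-prime one.
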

	\begin{proof}
		The proof comes from the definitions. 
	\end{proof}

	\begin{thm}
		Let $M_{1}$ and $M_{2}$ be modules over $R_{1}$ and $R_{2}$ respectively with $M=M_{1} \times M_{2}$. Suppose that $P_{1}$ is an $I_{1}$-primary submodule of $M_{1}$ such that $IP_{1} \times M_{2} \subseteq I(P_{1} \times M_{2})$. Then $P_{1} \times M_{2}$ is an $I$-primary submodule of $M_{1} \times M_{2}$. 
	\end{thm}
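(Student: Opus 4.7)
The plan is to exploit the product decomposition of the ring and of $I$, reduce the $I$-primary test on the product to the $I_1$-primary test on the first factor, and then translate the radical condition back to the product colon ideal. Writing $R=R_1\times R_2$, the ideal $I$ has the form $I_1\times I_2$ for some ideals $I_j$ of $R_j$ (this implicitly identifies the $I_1$ of the hypothesis with the first component of $I$). I would fix $(r_1,r_2)\in R$ and $(m_1,m_2)\in M$ with
\[
(r_1m_1,r_2m_2)\in (P_1\times M_2)-I(P_1\times M_2),
\]
so that in particular $r_1m_1\in P_1$.

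The crucial first step is to show that $r_1m_1\notin I_1P_1$. This is exactly where the auxiliary assumption $IP_1\times M_2\subseteq I(P_1\times M_2)$ is used: if $r_1m_1$ belonged to $I_1P_1$, then $(r_1m_1,r_2m_2)$ would lie in $I_1P_1\times M_2$ and hence, by the hypothesis, in $I(P_1\times M_2)$, contradicting our choice. Thus $r_1m_1\in P_1-I_1P_1$, and the $I_1$-primary property of $P_1$ in $M_1$ gives $m_1\in P_1$ or $r_1\in\sqrt{(P_1:_{R_1}M_1)}$.

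In the first case $(m_1,m_2)\in P_1\times M_2$ immediately. In the second case I would invoke the routine identity
\[
\bigl((P_1\times M_2):_R(M_1\times M_2)\bigr)=(P_1:_{R_1}M_1)\times R_2,
\]
whose radical is $\sqrt{(P_1:_{R_1}M_1)}\times R_2$, to conclude that $(r_1,r_2)\in\sqrt{((P_1\times M_2):_R(M_1\times M_2))}$. Combining the two cases yields the $I$-primary conclusion for $P_1\times M_2$. The only non-formal step is the first one, and even there the difficulty is simply to notice that the standing assumption $IP_1\times M_2\subseteq I(P_1\times M_2)$ is tailor-made to rule out the residual possibility $r_1m_1\in I_1P_1$; the remainder is bookkeeping with colon ideals and radicals in a product ring.
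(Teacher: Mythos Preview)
Your argument is correct and follows essentially the same route as the paper: both use the standing hypothesis $IP_1\times M_2\subseteq I(P_1\times M_2)$ to pass from $(r_1m_1,r_2m_2)\in (P_1\times M_2)-I(P_1\times M_2)$ to $r_1m_1\in P_1-I_1P_1$, then apply the $I_1$-primary property of $P_1$ and translate the conclusion back via $\bigl((P_1\times M_2):_R M\bigr)=(P_1:_{R_1}M_1)\times R_2$. Your presentation is in fact a bit cleaner, since you make the identification $I=I_1\times I_2$ explicit whereas the paper silently conflates $IP_1$ with $I_1P_1$.
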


	\begin{proof}
	Let $(r_{1},r_{2}) \in R$, and $(x_{1},x_{2}) \in M$ with $(r_{1},r_{2})(x_{1},x_{2})=(r_{1}x_{1},r_{2}x_{2}) \in P_{1} \times M_{2}-I(P_{1} \times M_{2})$, and $P_{1} \times M_{2}-I(P_{1} \times M_{2}) \subseteq P_{1} \times M_{2}-IP_{1} \times M_{2}=(P_{1}-IP_{1}) \times M_{2}$. We have $r_{1}x_{1} \in P_{1}-IP_{1}$ but $P_{1}$ is $I$-primary submodule of $M_{1}$. Then $r_{1}^{n} \in (P_{1}:_{R_{1}}M_{1})$ for some positive integer  $n$ or $x_{1} \in P_{1}$. So $(r_{1},r_{2})^{n}=(r_{1}^{n},r_{2}^{n}) \in (P_{1}:_{R_{1}}M_{1}) \times R_{2}=(P_{1} \times M_{2}:_{R_{1}\times R_{2}}M_{1} \times M_{2})$ for some positive integer  $n$ or $(x_{1},x_{2}) \in P_{1} \times M_{2}$. Hence $P_{1} \times M_{2}$ is an $I$-primary submodule of $M_{1} \times M_{2}$.
	\end{proof}

%	\begin{thm}
%		Let $M_{1}$ and $M_{2}$ be $R_{1}$ and $R_{2}$-module respectively with $M=M_{1} \times M_{2}$ be an $R$-module where $R=R_{1} \times R_{2}$. Then:
%		\\ (1) If $P_{1}$ is a primary submodule of $M_{1}$, then $P_{1} \times M_{2}$ is a primary submodule of $M$.
%		\\(2) If $P_{2}$ is a primary submodule of $M_{2}$, then $M_{1} \times P_{2}$ is primary submodule of $M$.
%	\end{thm}

We illustrate the formation of the $I$-primary submodules in decomposition modules. In other manner the formation of $I$-primary submodules in decomposable module have one of the five types stated in the following theorem.	

	\begin{thm} \label{a1}
		Let $R=R_{1} \times R_{2}$. Let $M_{1}$ and $M_{2}$ be $R_{1}$ and $R_{2}$-modules respectively with $M=M_{1} \times M_{2}$. Let $I_{1}$ and $I_{2}$ be ideals of $R_{1}$ and $R_{2}$ respectively with $I=I_{1} \times I_{2}$. Then all the following types are $I$-primary submodule of $M_{1} \times M_{2}$.
		\begin{enumerate}
		\item $P_{1} \times P_{2}$ where $P_{i}$ is a proper submodule of $M_{i}$ with $I_{i}P_{i}=P_{i}$ for $i=1, 2$.
		\item $P_{1} \times M_{2}$ where $P_{1}$ is a primary submodule of $M_{1}$.
		\item $P_{1} \times M_{2}$ where $P_{1}$ is an $I_{1}$-primary submodule of $M_{1}$ and $I_{2}M_{2}=M_{2}$.
		\item $M_{1} \times P_{2}$ where $P_{2}$ is a primary submodule of $M_{2}$.
		\item $M_{1} \times P_{2}$ where $P_{2}$ is an $I_{2}$-primary submodule of $M_{2}$ and $I_{1}M_{1}=M_{1}$.
		\end{enumerate}
		\end{thm}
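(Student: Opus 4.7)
The plan is to handle the five types in three groups, since types 2 and 4, and types 3 and 5, are symmetric under swapping the factors. In every case I would expand $IP$ and $\sqrt{(P:_{R}M)}$ component-by-component, using the standard identities $(N_{1}\times N_{2}:_{R}M_{1}\times M_{2})=(N_{1}:_{R_{1}}M_{1})\times(N_{2}:_{R_{2}}M_{2})$ and $\sqrt{A\times R_{2}}=\sqrt{A}\times R_{2}$ (and its mirror), together with $(A_{1}\times A_{2})(N_{1}\times N_{2})=A_{1}N_{1}\times A_{2}N_{2}$. Then check the defining implication of Definition/statement just before Theorem~\ref{1} directly.

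For type~1, the computation $IP=(I_{1}\times I_{2})(P_{1}\times P_{2})=I_{1}P_{1}\times I_{2}P_{2}=P_{1}\times P_{2}=P$ gives $P-IP=\emptyset$, so the $I$-primary condition holds vacuously. This is the quickest case and costs nothing.

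For types~2 and~4, I would first remark that every primary submodule is automatically $I$-primary for every ideal $I$, because $P-IP\subseteq P$. It then suffices to verify that $P_{1}\times M_{2}$ is a primary submodule of $M$ when $P_{1}$ is primary in $M_{1}$ (type~4 is symmetric). Given $(r_{1},r_{2})(x_{1},x_{2})\in P_{1}\times M_{2}$, extract $r_{1}x_{1}\in P_{1}$, apply the primary property of $P_{1}$, and lift the conclusion via the identities above: $x_{1}\in P_{1}$ yields $(x_{1},x_{2})\in P$, while $r_{1}\in\sqrt{(P_{1}:_{R_{1}}M_{1})}$ yields $(r_{1},r_{2})^{n}\in(P_{1}:_{R_{1}}M_{1})\times R_{2}=(P:_{R}M)$ for some $n$.

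For types~3 and~5, the hypothesis $I_{2}M_{2}=M_{2}$ (resp.\ $I_{1}M_{1}=M_{1}$) is essential and gives $IP=I_{1}P_{1}\times I_{2}M_{2}=I_{1}P_{1}\times M_{2}$, so $P-IP=(P_{1}-I_{1}P_{1})\times M_{2}$. Any $(r_{1},r_{2})(x_{1},x_{2})\in P-IP$ therefore forces $r_{1}x_{1}\in P_{1}-I_{1}P_{1}$, and the $I_{1}$-primary property of $P_{1}$ provides either $x_{1}\in P_{1}$ or $r_{1}\in\sqrt{(P_{1}:_{R_{1}}M_{1})}$, which translates to $(x_{1},x_{2})\in P$ or $(r_{1},r_{2})\in\sqrt{(P:_{R}M)}$ by the same radical calculation. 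The only real obstacle is bookkeeping: organizing the componentwise computations of $IP$ and $\sqrt{(P:_{R}M)}$ cleanly so the five cases do not become repetitive; no deep idea is needed beyond noting that the $I$-primary condition on a product submodule reduces to an $I_{i}$-primary or primary condition on a single factor, with the other factor either killed (type~1) or made trivial by either $M_{i}$ itself or the absorption hypothesis $I_{j}M_{j}=M_{j}$.
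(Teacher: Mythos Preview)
Your proposal is correct and follows essentially the same route as the paper: type~1 is handled by the vacuous argument $P-IP=\emptyset$, types~2 and~4 by noting that $P_{1}\times M_{2}$ is primary (hence $I$-primary) when $P_{1}$ is primary, and types~3 and~5 by using $I_{2}M_{2}=M_{2}$ to reduce $P-IP$ to $(P_{1}-I_{1}P_{1})\times M_{2}$ and invoking the $I_{1}$-primary property of $P_{1}$. Your explicit listing of the componentwise identities for colon ideals, radicals, and products is a helpful bit of bookkeeping that the paper leaves implicit, but the underlying argument is identical.
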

		
	\begin{proof}
	\begin{enumerate}
	\item Since $I_{1}P_{1}=P_{1}$ and $I_{2}P_{2}=P_{2}$. Then $I_{1}P_{1} \times I_{2}P_{2}= I{_1} \times I_{2}(P_{1} \times P_{2})= I(P_{1} \times P_{2})=P_{1} \times P_{2}$. So $P_{1} \times P_{2}= I(P_{1} \times P_{2})= \phi $. Thus there is nothing to prove. 
	\item Let $P_{1}$ be a primary submodule of $M_{1}$. Then $P_{1} \times M_{2}$ is a primary submodule of $M_{1} \times M_{2}$. Thus $P_{1} \times M_{2}$ is $I$-primary submodule of $M$. 
	\item Suppose that $P_{1}$ is an $I_{1}$-primary submodule of $M_{1}$ and $I_{2}M_{2}=M_{2}$. Let $(r_{1},r_{2}) \in R$ and $(m_{1},m_{2}) \in M$ such that $(r_{1},r_{2})(m_{1},m_{2})=(r_{1}m_{1},r_{2}m_{2}) \in P_{1} \times M_{2}-I(P_{1} \times M_{2})= P_{1} \times M_{2}-(I_{1} \times I_{2})(P_{1} \times M_{2})= P_{1} \times M_{2}-(I_{1}P_{1} \times I_{2}M_{2})= P_{1} \times M_{2}-(I_{1}P_{1} \times M_{2})=(P_{1}-I_{1}P_{1}) \times M_{2}$. Then $r_{1}m_{1} \in P_{1}-I_{1}P_{1}$ and $P_{1}$ is $I_{1}$-primary submodule of $M_{1}$, so $r_{1}^{n} \in (P_{1}:_{R_{1}}M_{1})$ for some positive integer $n \in N$ or $m_{1} \in P_{1}$. Therefore $(r_{1},r_{2})^{n}=(r_{1}^{n},r_{2}^{n}) \in (P_{1}:_{R_{1}}M_{1}) \times R_{2}=(P_{1} \times M_{2}:_{R_{1} \times R_{2}} M_{1} \times M_{2})$ for some positive integer $n \in N$ or $(m_{1},m_{2}) \in P_{1} \times M_{2}$. So $P_{1} \times M_{2}$ is an $I$-primary submodule of $M_{1} \times M_{2}$. 
	\item The proof is similar to part (2).
	\item The proof is similar to part (3).
	\end{enumerate}
		
	\end{proof}

	\begin{prop}
		Let $M$ be an $R$-module and let $P$ be a proper submodule of $M$. Then $P$ is $I$-primary in $M$ if and only if $\frac{P}{IP}$ is $0$-primary in $\frac{M}{IP}$. 
	\end{prop}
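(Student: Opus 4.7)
The plan is to use the canonical correspondence between submodules of $M$ containing $IP$ and submodules of the quotient $M/IP$, and to verify that the defining conditions of ``$I$-primary in $M$'' and ``$0$-primary in $M/IP$'' match up under this correspondence.

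First I would record that $P/IP$ is a proper submodule of $M/IP$ if and only if $P$ is a proper submodule of $M$ (since $IP \subseteq P$, equality $P/IP = M/IP$ would force $P=M$). Next comes the main bookkeeping step: for $r \in R$ and $m \in M$, the condition $rm \in P - IP$ is equivalent to $r(m+IP) \in (P/IP) - \{0_{M/IP}\}$, because $rm+IP \in P/IP$ iff $rm \in P$, and $rm + IP \neq 0$ iff $rm \notin IP$. I would also verify the two ``translation identities'' $m + IP \in P/IP \iff m \in P$ (using $IP \subseteq P$) and $(P/IP :_R M/IP) = (P :_R M)$, the latter because $r \cdot M/IP \subseteq P/IP$ iff $rM + IP \subseteq P$ iff $rM \subseteq P$. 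Consequently $r^n \in (P/IP :_R M/IP)$ iff $r^n \in (P :_R M)$, and hence $r \in \sqrt{(P/IP:_R M/IP)}$ iff $r \in \sqrt{(P:_R M)}$.

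With these identifications in hand, both implications become transparent. For the forward direction, assuming $P$ is $I$-primary in $M$, any $r(m+IP) \in (P/IP) - \{0\}$ gives $rm \in P - IP$, so $m \in P$ or $r \in \sqrt{(P:_R M)}$, which translates to $m+IP \in P/IP$ or $r \in \sqrt{(P/IP:_R M/IP)}$, yielding $0$-primariness of $P/IP$ in $M/IP$. The reverse direction is obtained by reading the same equivalences the other way: if $rm \in P - IP$ then $r(m+IP) \in (P/IP) - \{0\}$, and the $0$-primary hypothesis delivers the required conclusion.

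I do not anticipate any real obstacle here; the entire argument is a direct translation through the quotient $M \to M/IP$, and the only content is the routine verification of the two identifications above. I would present the proof by first stating these identifications as a short paragraph, then giving the two implications as one or two lines each.
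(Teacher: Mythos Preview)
Your proposal is correct and follows essentially the same route as the paper's proof: both arguments rest on the observation that $rm \in P - IP$ is equivalent to $r(m+IP) \in (P/IP)\setminus\{0\}$, together with the identity $(P/IP:_R M/IP) = (P:_R M)$, and then each implication is a direct translation through the quotient. Your write-up is in fact slightly more careful than the paper's, since you explicitly check properness of $P/IP$ and state the colon identity before using it.
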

	\begin{proof}
		Suppose that $P$ is $I$-primary in $M$. Let $r \in R, m \in M$ such that $0 \neq rm+IP=r(m+IP) \in \frac{P}{IP}$ in $\frac{M}{IP}$. Then $rm \in P-IP$. But by assuming $P$ is $I$-primary, so $m \in P$ or $r\in \sqrt{(P:M)}$. Then $r \in \sqrt{P:M}=\sqrt{(\frac{P}{IP}:\frac{M}{IP})}$ or $m+IP \in \frac{P}{IP}$. Hence $\frac{P}{IP}$ is $0$-primary submodule in $\frac{M}{IP}$. Conversely suppose that $\frac{P}{IP}$ is $0$-primary submodule in $\frac{M}{IP}$. Let $r\in R,m \in M$ such that $rm \in P-IP$. So $0 \neq r(m+IP)=rm+IP \in \frac{P}{IP}$. But $\frac{P}{IP}$ is $\{0\}$-primary in $\frac{M}{IP}$. Thus $m+IP \in \frac{P}{IP}$ or $r \in \sqrt{(\frac{P}{IP}:\frac{M}{IP})}$ and so $m \in P$ or $r \in \sqrt{(P:M)}$. Hence $P$ is $0$-primary.  
	\end{proof}
	
	Now we give two other charactrizations of $I$-primary submodules.
	\begin{thm}
		Let $M$ be an $R$-module and $P$ be a proper submodule of $M$. Then $P$ is $I$-primary in $M$ if and only if for any ideal $J$ of $R$ and submodule $N$ of $M$ such that $JN \subseteq P-IP$, we have $J \subseteq \sqrt{(P:M)}$ or $N \subseteq P$
	\end{thm}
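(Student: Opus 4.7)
The plan is to prove the two implications separately; the reverse direction is essentially just the definition applied to principal data, while the forward direction requires a contradiction argument with a careful case analysis.

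For the reverse direction, given $r \in R$ and $m \in M$ with $rm \in P - IP$, I would set $J = Rr$ and $N = Rm$. Since $P$ is a submodule and $rm \in P$, we have $JN \subseteq P$, and since $rm \in JN$ but $rm \notin IP$, we have $JN \not\subseteq IP$. Applying the hypothesis yields $r \in J \subseteq \sqrt{(P:M)}$ or $m \in N \subseteq P$, which is exactly the $I$-primary condition. (The subset interpretation of ``$JN \subseteq P-IP$'' that makes the theorem non-vacuous is $JN \subseteq P$ and $JN \not\subseteq IP$, by analogy with the weakly primary case.)

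For the forward direction I would assume $P$ is $I$-primary, take $J$ and $N$ with $JN \subseteq P$ and $JN \not\subseteq IP$, and argue by contradiction: suppose $J \not\subseteq \sqrt{(P:M)}$ and $N \not\subseteq P$, and fix witnesses $a \in J \setminus \sqrt{(P:M)}$ and $m \in N \setminus P$, together with $b \in J$, $n \in N$ such that $bn \notin IP$ (which exists because $JN \not\subseteq IP$). The easy case is $am \notin IP$, which contradicts the $I$-primary property of $P$ at $(a,m)$ directly. In the remaining case $am \in IP$, I would test the status of $an$ and $bm$ modulo $IP$ and, in each sub-case, build one of the combinations $a(m+n)$, $(a+b)m$, $(a+b)n$, or $(a+b)(m+n)$ so that exactly one of the four product-terms escapes $IP$ while the sum remains in $P$. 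Applying the $I$-primary property to this combination, together with the auxiliary alternative extracted from $bn \in P - IP$, forces either $a \in \sqrt{(P:M)}$ or $m \in P$, contradicting the choice of the witnesses.

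The main obstacle is organising the sub-case analysis cleanly; the intuition is simple — ``perturb $a$ by $b$ and $m$ by $n$ to dodge $IP$ without leaving $P$'' — but one has to verify in each branch that the single $\notin IP$ term cannot be cancelled by the three $\in IP$ terms and that the alternative $m+n \in P$ or $a+b \in \sqrt{(P:M)}$ can always be reduced to the forbidden conclusion $m \in P$ or $a \in \sqrt{(P:M)}$. This is the natural $I$-primary analogue of the standard proof for weakly primary submodules with ``$0 \neq$'' replaced throughout by ``$\notin IP$''.
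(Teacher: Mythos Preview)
Your reverse direction matches the paper's. The forward direction differs because of how you read the hypothesis. The paper treats ``$JN \subseteq P - IP$'' as a literal set inclusion (every element of $JN$ lies in $P \setminus IP$), so after choosing $r \in J \setminus \sqrt{(P:M)}$ and $x \in N \setminus P$ it immediately has $rx \in JN \subseteq P - IP$ and applies the definition once --- no perturbation, no case split. You instead read the hypothesis as ``$JN \subseteq P$ and $JN \not\subseteq IP$'', which is genuinely weaker on the forward side (hence your four-way analysis on $an$ and $bm$) but is exactly what makes the converse non-vacuous: under the literal reading the paper's line ``$(r)(m)=(rm)\subseteq P-IP$'' is problematic since $0\in (r)(m)\cap IP$, a point the paper glosses over. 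So your forward argument is proving a strictly stronger implication than the paper's.

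Your case analysis is essentially sound; the one place to tighten is the sub-case $an,bm\in IP$. There the pair of conclusions from $(a+b)(m+n)\in P-IP$ and $bn\in P-IP$ does not by itself rule out the crossed alternatives (e.g.\ $a+b\in\sqrt{(P:M)}$ together with $n\in P$), so one more application of the $I$-primary property to $(a+b)n$ or to $b(m+n)$ is needed to close it. With that extra step your argument goes through.
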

	\begin{proof}
		Suppose that $P$ is $I$-primary submodule of $M$, and $JN \subseteq P-IP$ for some ideal $J$ of $R$ and submodule $N$ of $M$. If $J \nsubseteq \sqrt{(P:M)}$ and $N \nsubseteq P$, so there exists $r \in J-\sqrt{(P:M)}$ and $x\in N-P$ such that $rx \in P-IP$. By assuming $P$ is $I$-primary in $M$, either $x\in P$ or $r\in \sqrt{(P:M)}$ which is a contradiction. Hence $J \subseteq \sqrt{(P:M)}$ or $N\subseteq P$. Conversely suppose that $rm\in P-IP$ for $r\in R$ and $m \in M$. Then $(r)(m)=(rm) \subseteq P-IP$. So by assumption, either $(r) \subseteq \sqrt{(P:M)}$ or $(m) \subseteq P$. Therefore $r\in \sqrt{(P:M)}$ or $m \in P$. Thus $P$ is an $I$-primary submodule of $M$. 
	\end{proof}

	The following theorem gives the relation between $I$-primary ideals and $I$-primary submodules.
	\begin{thm} \label{1.1}
		Let $M$ be a finitely generated faithful multiplication $R$-module and $P$ a proper submodule of $M$ with $(IP:M)=I(P:M)$. Then $P$ is an $I$-primary submodule of $M$ if and only if  $(P:M)$ is an $I$-primary ideal of $R$.
	\end{thm}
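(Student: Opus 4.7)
The plan is to exploit the submodule$\leftrightarrow$ideal correspondence available for a finitely generated faithful multiplication module (every submodule $N$ satisfies $N = (N:M)M$, and conversely $(JM:M)=J$), together with the hypothesis $(IP:M)=I(P:M)$, which is exactly what allows us to convert containments in $I(P:M)$ on the ideal side into containments in $IP$ on the submodule side (and vice versa).

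For the forward implication I would invoke Theorem~\ref{1}. Given $a,b\in R$ with $ab\in (P:M)-I(P:M)$, the containment $ab\in (P:M)$ says $abM\subseteq P$, and $ab\notin I(P:M)=(IP:M)$ says $abM\not\subseteq IP$. Assuming $a\notin\sqrt{(P:M)}$, Theorem~\ref{1} yields $(P:a)=P$ or $(P:a)=(IP:a)$. The second alternative would give $bM\subseteq (IP:a)$, whence $abM\subseteq IP$, contradicting the above; so $(P:a)=P$ and $bM\subseteq (P:a)=P$, i.e.\ $b\in (P:M)$. Hence $(P:M)$ is $I$-primary.

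For the converse, assume $(P:M)$ is $I$-primary, let $rm\in P-IP$ with $r\notin\sqrt{(P:M)}$, and set $J=(Rm:M)$ so that $Rm=JM$. Then $rJM=R(rm)\subseteq P$ gives $rJ\subseteq (P:M)$, while $rJ\subseteq I(P:M)=(IP:M)$ would force $rm\in IP$, a contradiction. Pick $b\in J$ with $rb\in (P:M)-I(P:M)$; the $I$-primary hypothesis combined with $r\notin\sqrt{(P:M)}$ forces $b\in (P:M)$. To finish I would show $J\subseteq (P:M)$ elementwise: for $c\in J$, either $rc\notin I(P:M)$ and the $I$-primary condition applied to $rc\in (P:M)-I(P:M)$ gives $c\in (P:M)$, or $rc\in I(P:M)$, in which case $r(c+b)=rc+rb$ lies in $(P:M)-I(P:M)$, so $c+b\in (P:M)$, and combining with $b\in (P:M)$ gives $c\in (P:M)$. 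Thus $Rm=JM\subseteq (P:M)M=P$ and $m\in P$.

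The main obstacle is the final elementwise argument in the converse: a generic $c\in J$ might satisfy $rc\in I(P:M)$, so the $I$-primary condition on $(P:M)$ does not apply directly. The classical shift trick, replacing $c$ by $c+b$ for a fixed ``good'' element $b\in J$ with $rb\notin I(P:M)$, moves the product back into $(P:M)-I(P:M)$ and unlocks the hypothesis; this is the same device already used in the proof of Theorem~2.1 of this paper, and it is precisely the multiplication-module structure that guarantees the witness $b$ can be found inside $J=(Rm:M)$ rather than in some larger ideal.
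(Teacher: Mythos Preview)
Your proof is correct and follows exactly the paper's route: both directions use the multiplication-module dictionary $N\leftrightarrow (N:M)$ together with the hypothesis $(IP:M)=I(P:M)$ to pass between the submodule and the ideal, arriving (in the converse) at $rJ\subseteq (P:M)$ with $rJ\not\subseteq I(P:M)$ for $J=(Rm:M)$. Your elementwise shift argument there makes rigorous the step the paper states in one line (``$r(Rm:M)\subseteq (P:M)-I(P:M)$, therefore $r\in\sqrt{(P:M)}$ or $(Rm:M)\subseteq (P:M)$''); equivalently one could quote the ideal analogue of Theorem~\ref{1}, just as you used Theorem~\ref{1} itself in the forward direction.
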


	\begin{proof}
		Assume that $P$ is $I$-primary submodule in $M$. Let $r,s \in R$, such that $rs \in (P:M)-I(P:M)$. Then $rsM \subseteq P$. If $rsM \subseteq IP$. Then $rs \in (IP:M)=I(P:M)$ which is contradiction. Assume $rsM \nsubseteq IP$. Then $rsM \subseteq P-IP$. So $r \in \sqrt{(P:M)}$ or $sM \subseteq P$. Therefore $r \in \sqrt{(P:M)}$ or $s \in (P:M)$. Hence $(P:M)$ is $I$-primary ideal. Conversely assume that $(P:M)$ is $I$-primary ideal. Let $r \in R, m \in M$ such that $rm\in P-IP$. $r(Rm:M) = (rRm:M) \subseteq (P:M)$ and $r(Rm:M) \nsubseteq I(P:M)$ otherwise $rRm=r(Rm:M)M \subseteq I(P:M)M=IP$. That is $r(Rm:M) \subseteq (P:M)-I(P:M)$. Therefore $r \in \sqrt{(P:M)}$ or $(Rm:M)\subseteq (P:M)$. Hence $r\in \sqrt{(P:M)}$ or $Rm\subseteq P$. Thus  $r\in \sqrt{(P:M)}$ or $m \in P$. This means that $P$ is $I$-primary submodule. 
	\end{proof}

	\begin{thm}
			Let $M$ be a finitely generated faithful multiplication $R$-module and $P$ be a proper submodule of $M$ such that $I(P:M)=(IP:M)$. Then $P$ is $I$-primary in $M$ if and only if whenever $N$ and $K$ are submodules of $M$ such that $NK \subseteq P-IP$, we have either $N \subseteq P$ or $ K \subseteq \sqrt{P}$.   
		\end{thm}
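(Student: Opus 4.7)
The plan is to combine the multiplication-module identity $NK=(N:M)(K:M)M$ with the ideal/submodule characterization of $I$-primary submodules proved in the theorem immediately preceding this one, namely that $P$ is $I$-primary iff whenever $J$ is an ideal of $R$ and $N$ is a submodule of $M$ with $JN\subseteq P-IP$, we have $J\subseteq\sqrt{(P:M)}$ or $N\subseteq P$. The other ingredient is the standard fact that in a finitely generated faithful multiplication $R$-module one has $(JM:M)=J$ for every ideal $J$ of $R$, which tightens the ideal/submodule correspondence $J\leftrightarrow JM$.

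For the forward implication, suppose $P$ is $I$-primary and let $N,K$ be submodules of $M$ with $NK\subseteq P-IP$. I would rewrite $NK=(K:M)N$ and set $J:=(K:M)$; this makes the preceding theorem directly applicable and yields $N\subseteq P$ (the first desired alternative) or $(K:M)\subseteq\sqrt{(P:M)}$. In the latter case, multiplying through by $M$ gives
$$K=(K:M)M\subseteq\sqrt{(P:M)}M=\sqrt{P},$$
where the last equality uses the identification $M\text{-}\mathrm{rad}(P)=\sqrt{(P:M)}M$ recalled in the introduction.

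For the converse, I would assume the submodule condition and take $r\in R$, $m\in M$ with $rm\in P-IP$, then test the hypothesis on the two principal submodules $N:=Rm$ and $K:=rM$. Using $(rM:M)=(r)$, which holds because $M$ is finitely generated faithful multiplication, one computes $NK=(Rm:M)(r)M=r(Rm)=R(rm)$; this submodule lies in $P$ and is not contained in $IP$, as witnessed by $rm\in P-IP$. The submodule hypothesis then gives $Rm\subseteq P$ (hence $m\in P$) or $rM\subseteq\sqrt{P}=\sqrt{(P:M)}M$; taking colons in the latter case yields $(r)=(rM:M)\subseteq\sqrt{(P:M)}$, so $r\in\sqrt{(P:M)}$. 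Hence $P$ is $I$-primary.

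The only point to monitor is the paper's convention that $NK\subseteq P-IP$ is really shorthand for $NK\subseteq P$ together with $NK\not\subseteq IP$ (since $0$ always lies in $IP$); both halves of the biconditional proceed by verifying these two containments and then feeding them into the appropriate characterization. The hypothesis $I(P:M)=(IP:M)$, carried over from Theorem~\ref{1.1}, is retained here for uniformity but does not play an active role in the argument above.
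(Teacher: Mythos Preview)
Your proof is correct, and it takes a genuinely different route from the paper's. The paper argues both directions through Theorem~\ref{1.1}: for the forward implication it first deduces that $(P:M)$ is an $I$-primary ideal of $R$ and then runs a contradiction argument at the ideal level; for the converse it again reduces to showing $(P:M)$ is $I$-primary, testing with $N=aM$, $K=bM$. In both halves the paper invokes the hypothesis $I(P:M)=(IP:M)$ explicitly (to rule out $abM\subseteq IP$ in the converse, and implicitly through Theorem~\ref{1.1} in the forward direction).

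Your argument bypasses Theorem~\ref{1.1} entirely. For the forward direction you rewrite $NK=(K:M)N$ and feed this directly into the ideal--submodule characterization (the theorem immediately preceding Theorem~\ref{1.1}); for the converse you verify the elementwise definition of $I$-primary by testing on $N=Rm$, $K=rM$ and using $(rM:M)=(r)$. The upshot is that your proof never touches the assumption $I(P:M)=(IP:M)$, so you have in fact established a sharper statement than the one printed: the equivalence holds for any proper submodule $P$ of a finitely generated faithful multiplication module, with no extra condition on $I$. Your closing remark that the hypothesis ``does not play an active role'' is therefore exactly right, and this is the main payoff of your approach over the paper's.
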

		\begin{proof}
			Suppose that $P$ is an $I$-primary submodule in $M$. So $(N:P)$ is an $I$-primary ideal in $R$. As $M$ is a multiplication $R$-module, $N=(N:M)M$, $K=(K:M)M$ and so $N.K=(N:M)(K:M)M$. Suppose that $NK \subseteq P-IP$, but $N \nsubseteq P$ and $K \nsubseteq \sqrt{P}$. Thus $(N:M) \nsubseteq (P:M)$ and $(K:M) \nsubseteq \sqrt{(P:M)}$. As $(P:M)$ is $I$-primary in $R$. So either $(N:M)(K:M) \nsubseteq (P:M)$ or $(N:M)(K:M) \subseteq I(P:M)$. In the first case, we have $N.K=(N:M)(K:M)M \subseteq (P:M)M=P$ which is contradiction. Now if $(N:M)(K:M) \subseteq I(P:M)$, then $N.K=(N:M)(K:M)M \subseteq I(P:M)M=IP$, which is again a contradiction. Hence either $N \subseteq P$ or $K \subseteq P$. To prove the converse part, by Theorem \ref{1.1}, it is enough to prove that $(P:M)$ is $I$-primary ideal in $R$. Let $a,b \in R$ such that $ab \in (P:M)-I(P:M)$ with $a \notin (P:M)$ and $b \notin (P:M)$. Take $N=aM, K=bM$. Then $N.K=abM \subseteq (P:M)M=P$. If $NK=abM \subseteq IP$, then $ab \in (IP:M)=I(P:M)$, which is contradiction. Hence $NK \subseteq P-IP$. By hypothesis and since $M$ is a faithful multiplication module, we have either $N=aM \subseteq P$ or $K=bM \subseteq \sqrt{P} =\sqrt{(P:M)M}$ and so $a \in (P:M)$ or $b \in \sqrt{(P:M)}$. Therefore $(P:M)$ is an $I$-primary ideal of $R$.
		\end{proof}

	\begin{prop}
		Let $P$ be a submodule of an $R$-module $M$ and $N$ be any $R$-module. Then $P$ is an $I$-primary submodule of $M$ if and only if $P\oplus N$ is an $I$-primary submodule of $M \oplus N$. 
	\end{prop}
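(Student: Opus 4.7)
The plan is to prove the two implications separately, using two structural identities that should be recorded at the outset: $I(P\oplus N)=IP\oplus IN$ (both sides are generated by the elements $(ip,in)$ and close under coordinatewise sums, so $(x,y)\in I(P\oplus N)$ iff $x\in IP$ and $y\in IN$), and $(P\oplus N:_{R}M\oplus N)=(P:_{R}M)$ (because $rN\subseteq N$ is automatic, so the colon ideal only detects the $M$-factor). In particular $\sqrt{(P\oplus N:M\oplus N)}=\sqrt{(P:M)}$, and $(P\oplus N)-I(P\oplus N)$ consists of the pairs $(x,y)\in P\oplus N$ with $x\notin IP$ or $y\notin IN$.

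For the reverse implication, assume $P\oplus N$ is $I$-primary. Given $r\in R$ and $m\in M$ with $rm\in P-IP$, I would pull back via the embedding $m\mapsto(m,0)$: the element $r(m,0)=(rm,0)$ lies in $P\oplus N$ because $rm\in P$, and it escapes $I(P\oplus N)=IP\oplus IN$ because $rm\notin IP$. Applying the $I$-primary hypothesis at $(m,0)$ yields $r\in\sqrt{(P\oplus N:M\oplus N)}=\sqrt{(P:M)}$ or $(m,0)\in P\oplus N$, and the latter is exactly $m\in P$. This is the $I$-primary condition for $P$.

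For the forward implication, assume $P$ is $I$-primary and take $r(m,n)\in(P\oplus N)-I(P\oplus N)$. Then $rm\in P$ and one of $rm\notin IP$ or $rn\notin IN$ holds. If $rm\notin IP$ then $rm\in P-IP$, and the $I$-primary property of $P$ directly gives $r\in\sqrt{(P:M)}$ or $m\in P$, which translates to the required conclusion for $(m,n)$. If instead $rm\in IP$ (so necessarily $rn\notin IN$), I would try to perturb $m$ by an element of $P$: if some $p\in P$ satisfies $rp\notin IP$, then $r(m+p)=rm+rp$ is in $P$ but not in $IP$ (sum of an element of $IP$ with one outside), so the previous case applies to $(m+p,n)$ and returns $r\in\sqrt{(P:M)}$ or $m+p\in P$, the latter forcing $m\in P$ because $p\in P$.

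The principal obstacle is the residual subcase $rP\subseteq IP$ in the forward direction, where no such perturbation is available. Here I would invoke the colon-ideal characterisation of Theorem~\ref{1}: for $r\notin\sqrt{(P:M)}$ either $(P:_{M}r)=P$ or $(P:_{M}r)=(IP:_{M}r)$. Since $m\in(P:_{M}r)\setminus P$ rules out the first alternative, one is pushed into the second, and the task is then to combine $rP\subseteq IP$ with the auxiliary datum $rn\notin IN$ to force $r\in\sqrt{(P:M)}$. This colon-ideal step is the only delicate part of the argument; the remainder is bookkeeping with the two identities stated at the outset.
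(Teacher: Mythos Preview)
Your reverse implication is correct and essentially identical to the paper's argument. For the forward implication you have correctly isolated a difficulty that the paper's own proof simply skips: the paper asserts without justification that $r(m,n)\in(P\oplus N)\setminus I(P\oplus N)$ forces $rm\in P\setminus IP$, ignoring the possibility that $rm\in IP$ while $rn\notin IN$. Your perturbation trick disposes of part of this missing case, but the residual subcase you single out --- $rP\subseteq IP$, $rn\notin IN$, $m\notin P$, $r\notin\sqrt{(P:M)}$ --- cannot be closed by the colon-ideal characterisation of Theorem~\ref{1}. That characterisation only tells you $(P:_{M}r)=(IP:_{M}r)$, which is perfectly consistent with $m\in(IP:_{M}r)$; the datum $rn\notin IN$ lives entirely on the $N$-side and carries no information about $\sqrt{(P:M)}$.

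In fact this subcase is a genuine obstruction: the forward implication is false as stated. Take $R=\mathbb{Z}$, $I=0$, $M=\mathbb{Z}/6\mathbb{Z}$, $P=0$, and $N=\mathbb{Z}$. Then $P\setminus IP=\emptyset$, so $P$ is vacuously $I$-primary in $M$. However $P\oplus N=0\oplus\mathbb{Z}$ is \emph{not} $I$-primary in $M\oplus N$: with $r=2$ and $(m,n)=(\bar 3,1)$ one has $r(m,n)=(\bar 0,2)\in(0\oplus\mathbb{Z})\setminus\{(0,0)\}$, yet $(\bar 3,1)\notin 0\oplus\mathbb{Z}$ and $2\notin\sqrt{(0\oplus\mathbb{Z}:_{R}M\oplus N)}=\sqrt{6\mathbb{Z}}=6\mathbb{Z}$. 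So the step you flagged as ``delicate'' is not completable, and both your argument and the paper's proof break at exactly this point; an extra hypothesis such as $IN=N$ (compare part~(3) of Theorem~\ref{a1}) is needed for the forward direction to hold.
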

	\begin{proof}
		Assume that $P$ is an $I$-primary submodule in $M$. Let $r \in R, (m,n) \in M \oplus N$ such that $r(m,n) \in (P \oplus N)-I(P \oplus N)$. Then $rm \in P-IP$. But $P$ is $I$-primary in $M$, so $m \in P$ or $r \in \sqrt{(P:M)}$, that is $(m,n) \in P \oplus N$ or $r \in \sqrt{(P \oplus N:M \oplus N)}$. Therefore $P \oplus M$ is $I$-primary in $M \oplus N$. Conversely suppose that $P \oplus N$ is $I$-primary in $M \oplus N$. Let $r \in R, m \in M$ such that $rm \in P-IP$. Then $r(m,0) \in (P \oplus N)-I(P \oplus N)$. But by assuming $P \oplus N$ is $I$-primary, so $(m,0) \in P \oplus N$ or $r \in \sqrt{(P \oplus N:M \oplus N)}$. Therefore $m \in P$ or $r \in \sqrt{(P:M)}$. Hence $P$ is $I$-primary in $M$.
	\end{proof}

%	\begin{cor}
%		Any finite direct sum of $I$-primary submodules is again $I$-primary.
%	\end{cor}

	Let $J$ be an ideal. We show that under a certain condition the $I$-primaryness of submodules $P$ and $(P:J)$ are equivalent. First we give the following lemma.
	\begin{lemma} \label{2}
		Let $P$ be a submodule of a faithful multiplication $R$-module $M$ and $J$ be finitely generated faithful multiplication ideal of $R$, then:\\
		(1) $P=(JP:J)$;\\
		(2) If $P \subseteq JM$, then $(KP:J)=K(P:J)$ for any ideal $K$ of $R$.
	\end{lemma}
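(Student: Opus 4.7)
The plan is to reduce both parts to a single cancellation principle for the ideal $J$: since $J$ is a finitely generated faithful multiplication ideal of $R$, the map $N \mapsto JN$ is injective on submodules of the faithful multiplication module $M$ (a standard consequence of the El-Bast and Smith theory of multiplication modules); equivalently, $JN_1 \subseteq JN_2$ implies $N_1 \subseteq N_2$. I will also use the companion fact that $I_1 M \subseteq I_2 M$ forces $I_1 \subseteq I_2$ for ideals $I_1, I_2$ of $R$, which follows from the faithful multiplication hypothesis on $M$ (this is precisely the cancellation property of faithful multiplication modules, where the map $I \mapsto IM$ is an order-preserving bijection from ideals of $R$ to submodules of $M$).

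For (1), the inclusion $P \subseteq (JP:J)$ is immediate since $JP \subseteq JP$. For the reverse, I take $x \in (JP:J)$, so $Jx \subseteq JP$. Setting $N = Rx + P$, one computes $JN = J\cdot Rx + JP = Jx + JP = JP$, whence cancellation forces $N = P$ and so $x \in P$.

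For (2), the easy inclusion $K(P:J) \subseteq (KP:J)$ follows because for $k \in K$ and $x \in (P:J)$ one has $J(kx) = kJx \subseteq kP \subseteq KP$, and this extends to sums. For the reverse inclusion I exploit the hypothesis $P \subseteq JM$. Since $M$ is a multiplication module, write $P = IM$ where $I = (P :_R M)$; the containment $IM \subseteq JM$ then yields $I \subseteq J$ by faithfulness, and since $J$ is itself a multiplication ideal, $I$ can be presented as $I = JI'$ for some ideal $I'$ of $R$. Hence $P = J(I'M) = JP'$ with $P' := I'M$. Applying part (1) to $P'$ gives $(P:J) = (JP':J) = P'$, and applying part (1) to $KP'$ gives $(KP:J) = (J\cdot KP' : J) = KP' = K(P:J)$, as desired.

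The technical heart of the argument is the appeal to cancellation in the multiplication-module framework; once those results are quoted, everything else is formal manipulation. The subtle step is verifying that the decomposition $P = JP'$ is available whenever $P \subseteq JM$, which is where both hypotheses on $M$ (multiplication, to present $P = IM$; faithful, to pass from $IM \subseteq JM$ to $I \subseteq J$) and on $J$ (multiplication, to present $I = JI'$) are used simultaneously.
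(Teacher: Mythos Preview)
The paper does not prove this lemma at all: it is stated without proof, evidently quoted from the multiplication-module literature (most likely Ali's paper on multiplication modules and homogeneous idealization, reference~\cite{34} in the bibliography). So there is no ``paper's own proof'' to compare against; your proposal supplies an argument where the authors simply cite a known result.

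On its merits, your approach is the standard one and is essentially correct. Part~(1) is exactly the usual cancellation argument, and your reduction of part~(2) to part~(1) via the presentation $P = JP'$ is the natural route. One point deserving more care: the cancellation principle ``$JN_1 \subseteq JN_2 \Rightarrow N_1 \subseteq N_2$ for submodules $N_i \subseteq M$'' is not a consequence of $J$ being a cancellation \emph{ideal} alone (that notion concerns ideals, not arbitrary submodules); it genuinely requires the interplay with $M$ being a faithful multiplication module. The cleanest justification localizes: a finitely generated faithful multiplication ideal is locally principal with regular generator, and a faithful multiplication module is locally cyclic and torsion-free over regular elements, so locally $a_{\mathfrak p}(N_1)_{\mathfrak p} \subseteq a_{\mathfrak p}(N_2)_{\mathfrak p}$ forces $(N_1)_{\mathfrak p} \subseteq (N_2)_{\mathfrak p}$. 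Similarly, the step ``$IM \subseteq JM \Rightarrow I \subseteq J$'' in your part~(2) is the cancellation property of faithful multiplication modules; this is again standard (El-Bast--Smith), but it is worth flagging that both hypotheses on $M$ are being used here, not just the multiplication structure. With those citations made explicit, your argument is complete.
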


	\begin{prop}
		Let $P$ be a submodule of a faithful multiplication $R$-module $M$ and $J$ be a finitely generated faithful multiplication ideal of $R$. Then $P$ is an $I$-primary submodule of $JM$ if and only if $(P:J)$ is $I$-primary in $M$.
	\end{prop}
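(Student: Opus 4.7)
The plan is to combine three structural facts: (a) since $J$ is a finitely generated faithful multiplication ideal and $M$ is a faithful multiplication module, every submodule $N \subseteq JM$ satisfies $N = J(N:J)$ (this follows from $N = (N:M)M$, $(N:M) \subseteq J$, and the multiplication structure of $J$, and is the companion of the cancellation identity $N = (JN:J)$ recorded in Lemma \ref{2}(1)); (b) by Lemma \ref{2}(2), $(IP:J) = I(P:J)$; and (c) $(P:_R JM) = ((P:J):_R M)$, so their radicals coincide. With these three facts in hand the proof reduces to an element-wise argument in each direction, linked by a translation trick.

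For the forward direction, assume $P$ is $I$-primary in $JM$ and take $rm \in (P:J) - I(P:J)$. Then $rJm \subseteq P$, while fact (b) forces $rJm \not\subseteq IP$, so some $j \in J$ produces $rjm \in P - IP$. Since $jm \in JM$, the hypothesis yields either $r \in \sqrt{(P:_R JM)} = \sqrt{((P:J):_R M)}$ (in which case we are done) or $jm \in P$. In the latter case the single witness $j$ must be propagated to all of $J$: for arbitrary $j' \in J$ the element $rj'm$ already lies in $P$, so if $rj'm \notin IP$ the $I$-primary hypothesis applied to $rj'm$ gives $j'm \in P$ directly, whereas if $rj'm \in IP$ one considers the translate $j + j'$, whereupon $r(j+j')m = rjm + rj'm$ lies in $P - IP$ (the $rjm$ summand keeps it outside $IP$), so $(j+j')m \in P$, and then $j'm = (j+j')m - jm \in P$. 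Hence $Jm \subseteq P$, i.e., $m \in (P:J)$.

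For the converse, assume $(P:J)$ is $I$-primary in $M$ and take $rx \in P - IP$ with $x \in JM$. Set $Y := (Rx : J)$; fact (a) applied to $N = Rx \subseteq JM$ gives $JY = Rx$. For every $y \in Y$, $rJy \subseteq R(rx) \subseteq P$ forces $ry \in (P:J)$. If every such $ry$ lay in $I(P:J)$, then $rRx = rJY \subseteq J \cdot I(P:J) = I \cdot J(P:J) = IP$ (using fact (a) once more for $J(P:J) = P$), contradicting $rx \notin IP$. So some $y_0 \in Y$ satisfies $ry_0 \in (P:J) - I(P:J)$, and the $I$-primary property of $(P:J)$ gives either $r \in \sqrt{((P:J):_R M)} = \sqrt{(P:_R JM)}$ (done) or $y_0 \in (P:J)$. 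In the second case the same translation trick, applied inside the submodule $Y$, promotes this to $y \in (P:J)$ for every $y \in Y$, after which $Rx = JY \subseteq J(P:J) = P$ gives $x \in P$.

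The main obstacle in both directions is the same: the $I$-primary hypothesis yields only a single witness ($jm \in P$ or $y_0 \in (P:J)$), whereas the desired conclusion concerns the whole ideal $J$ or the whole submodule $Y$. The translation-by-a-good-element trick resolves this by keeping each perturbation outside $IP$ (respectively $I(P:J)$), so that the $I$-primary condition can be reapplied to each perturbed element in turn. The remainder is bookkeeping with Lemma \ref{2} and the multiplication-module identities.
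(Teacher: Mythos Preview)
Your argument is correct. It differs from the paper's proof mainly in the tool used to pass between the two modules. The paper invokes the ideal--submodule characterization of $I$-primary submodules (their earlier result that $P$ is $I$-primary iff $JN\subseteq P$ with $JN\nsubseteq IP$ forces $J\subseteq\sqrt{(P:M)}$ or $N\subseteq P$): in the forward direction it applies this to the pair $(r)$, $Jm$ inside $JM$ to obtain $Jm\subseteq P$ in one stroke, and in the converse it verifies the criterion for $P$ in $JM$ by checking it for an arbitrary pair $K$, $N$ and reducing to the same criterion for $(P:J)$ in $M$ via $K(N:J)\subseteq (P:J)-I(P:J)$ and $N=J(N:J)$. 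Your proof stays with the element-wise definition throughout, so after producing a single witness ($jm\in P$, respectively $y_0\in(P:J)$) you need the translation-by-a-good-element trick to upgrade it to the whole of $J$, respectively $Y$. What the paper's route buys is brevity: the propagation step is absorbed into the characterization theorem. What your route buys is self-containment: you never appeal to that theorem, and since the paper's proof of that characterization is itself somewhat informal about the meaning of ``$JN\subseteq P-IP$'', your explicit translation argument is arguably the more careful of the two. Both proofs rest on the same structural inputs from Lemma~\ref{2}, namely $(IP:J)=I(P:J)$ and $N=J(N:J)$ for $N\subseteq JM$.
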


	\begin{proof}
		Suppose $P$ is $I$-primary in $JM$. Take $r \in R, m \in M$ such that $rm \in (P:J)-I(P:J)$. Then $rJm \subseteq P-IP$. If $rJm \nsubseteq IP$, then by Lemma \ref{2} , $rm \in (IP:J)=I(P:J)$ which is a contradiction. As $P$ is an $I$-primary in $JM$, then $Jm \subseteq P$ or $r \in \sqrt{(P:JM)}$. So $m \in (P:J)$ or $r \in \sqrt{((P:J):M)}$, because $(P:JM)=((P:J):M)$. Hence $(P:J)$ is an $I$-primary submodule in $M$. Conversely, assume that $(P:J)$ is $I$-primary in $M$. Let $K$ be an ideal of $R$, and $N$ be a submodule of $JM$ such that $KN \subseteq P-IP$. Then $K(N:J) \subseteq (KN:J) \subseteq (P:J)$. Moreover, if $K(N:J) \subseteq I(P:J)$, then by Lemma \ref{2} $KN=K(JN:J)=K(N:J)J\subseteq I(P:J)J=IP$ which is a contradiction. Hence $K(N:J)\subseteq (P:J)- I(P:J)$. As $(P:J)$ is $I$-primary in $M$, either $K\subseteq \sqrt{((P:J):M)}=\sqrt{(P:JM)}$ or $(N:J)\subseteq (P:J)$, which implies that $N=(N:J)J\subseteq (P:J)P=P$. Hence $P$ is an $I$-primary submodule in $JM$.
	\end{proof}

		\hspace{.3cm} Let $M$ and $F$ be $R$-modules and $r \in R$. Then it is clear that for any submodule $P$ of $M$, $F \otimes (P:r) \subseteq (F \otimes P:r)$. In the following lemma we give a condition under which the equality holds.
		\begin{lemma} \label{a2}
			Let $r \in R$ and $P$ a submodule of $M$. Then for every flat $R$-module $F$, we have $F\otimes(P:r)=(F \otimes P:r)$. 
		\end{lemma}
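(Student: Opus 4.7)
The plan is to identify both $F \otimes (P:r)$ and $(F \otimes P : r)$ with the same kernel inside $F \otimes M$. First I would consider the $R$-linear map $\phi \colon M \to M/P$ given by $\phi(m) = rm + P$; its kernel is exactly $(P:r)$, which yields the left exact sequence $0 \to (P:r) \hookrightarrow M \xrightarrow{\phi} M/P$. Because $F$ is flat, tensoring with $F$ preserves left exactness, so $F \otimes (P:r) = \ker(1_F \otimes \phi)$ as a submodule of $F \otimes M$.

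For the other side, I would factor $\phi$ as $\phi = \pi \circ \mu_r$, where $\mu_r \colon M \to M$ is multiplication by $r$ and $\pi \colon M \to M/P$ is the canonical projection. Applying $F \otimes -$ to the short exact sequence $0 \to P \to M \xrightarrow{\pi} M/P \to 0$ and using flatness again identifies $F \otimes P$ with $\ker(1_F \otimes \pi)$ inside $F \otimes M$. Since $1_F \otimes \phi = (1_F \otimes \pi) \circ (1_F \otimes \mu_r)$ and $1_F \otimes \mu_r$ is precisely multiplication by $r$ on $F \otimes M$, an element $x \in F \otimes M$ lies in $\ker(1_F \otimes \phi)$ if and only if $rx \in F \otimes P$, i.e.\ $x \in (F \otimes P : r)$.

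Combining the two identifications gives the desired equality. The inclusion $F \otimes (P:r) \subseteq (F \otimes P : r)$ was already noted in the paragraph preceding the lemma and does not use flatness; the content of the argument is the reverse inclusion, which is what flatness buys us. The only point requiring genuine care is that flatness is invoked twice — once to realize $F \otimes (P:r)$ as a submodule of $F \otimes M$ via the injection $F \otimes (P:r) \hookrightarrow F \otimes M$, and once to realize $F \otimes P$ as a submodule of $F \otimes M$ via $F \otimes P \hookrightarrow F \otimes M$ — so that the two kernels coincide as literal subsets of a common ambient module. Once these identifications are pinned down, the argument is essentially formal.
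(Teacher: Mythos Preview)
Your proposal is correct and follows essentially the same route as the paper: both introduce the map $M \to M/P$, $m \mapsto rm + P$, tensor the resulting left exact sequence $0 \to (P:r) \to M \to M/P$ with the flat module $F$, and use flatness on $0 \to P \to M \to M/P \to 0$ to identify $F \otimes (M/P)$ with $(F \otimes M)/(F \otimes P)$ so that the tensored kernel becomes $(F \otimes P : r)$. Your explicit factorization $\phi = \pi \circ \mu_r$ makes the final identification a bit more transparent than the paper's version, but the argument is the same.
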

	
		\begin{proof}
			Consider the exact sequence $0\longrightarrow (P:r)\longrightarrow M \stackrel{f_r}{\longrightarrow} \frac{M}{P} $ where $f_{r}(m)=rm+P$. As $F$ is flat , the exactness of the sequence $0\longrightarrow P\longrightarrow M\longrightarrow \frac{M}{P} \longrightarrow 0$ implies the exactness of the sequence $0\longrightarrow F \otimes P \longrightarrow F \otimes M \longrightarrow F \otimes \frac{M}{P} \longrightarrow 0$ which gives $ F \otimes \frac{M}{P} \cong \frac{F \otimes M}{F \otimes P}$. So the exactness of the sequence $0\longrightarrow (P:r)\longrightarrow M \longrightarrow \frac{M}{P}$ imply the exactness of the sequence $0\longrightarrow F \otimes (P:r) \longrightarrow F \otimes M       \stackrel{1 \otimes \acute {f_{r}}}{\longrightarrow} \frac{F \otimes M}{F \otimes P}$ where $(1 \otimes \acute {f_{r}}) (n \otimes m)=r.(n \otimes m)+ F \otimes P$ for $n \in F$. Therefore $F \otimes (P:r)= ker(1 \otimes \acute{f_{r}})=(F \otimes P:_{F \otimes M}r)$.
		\end{proof}

		\begin{thm} \label{110}
			Let $P$ be an $I$-primary submodule of an $R$-module $M$ and $F$ be a flat $R$-module with $F \otimes P \neq F \otimes M$. Then $F \otimes P$ is an $I$-primary submodule of $F \otimes M$. 
		\end{thm}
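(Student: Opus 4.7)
The plan is to use the characterization from Theorem~\ref{1} together with the tensor-colon identity of Lemma~\ref{a2}. Specifically, I would aim to verify, for every $r\in R-\sqrt{(F\otimes P:F\otimes M)}$, that either $(F\otimes P:r)=F\otimes P$ or $(F\otimes P:r)=(I(F\otimes P):r)$, and then invoke Theorem~\ref{1} (applied to $F\otimes P$ inside $F\otimes M$) to conclude that $F\otimes P$ is $I$-primary.

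The first step is to relate the two colon ideals. Since $a\in (P:M)$ forces $a(F\otimes M)=F\otimes aM\subseteq F\otimes P$, one gets $(P:M)\subseteq (F\otimes P:F\otimes M)$, hence $\sqrt{(P:M)}\subseteq \sqrt{(F\otimes P:F\otimes M)}$. Consequently, any $r\in R-\sqrt{(F\otimes P:F\otimes M)}$ also lies in $R-\sqrt{(P:M)}$, and Theorem~\ref{1} (for $P$) yields $(P:r)=P$ or $(P:r)=(IP:r)$. The second step is to transfer these equalities to $F\otimes M$. By Lemma~\ref{a2}, $F\otimes(P:r)=(F\otimes P:r)$ and $F\otimes(IP:r)=(F\otimes IP:r)$, so tensoring the two possibilities with $F$ gives $(F\otimes P:r)=F\otimes P$ or $(F\otimes P:r)=(F\otimes IP:r)$.

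The third step is to recognise $F\otimes IP=I(F\otimes P)$ as submodules of $F\otimes M$. This is the identification $a(f\otimes p)=f\otimes(ap)$ for $a\in I$, $f\in F$, $p\in P$; flatness of $F$ is what guarantees that $F\otimes IP$ embeds as a submodule of $F\otimes M$ and that the two sets of generators match. With this identification the second equality becomes $(F\otimes P:r)=(I(F\otimes P):r)$, which is exactly the condition needed to apply Theorem~\ref{1} in the reverse direction to $F\otimes P$ in $F\otimes M$.

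The routine calculations are the colon-tensor rewritings from Lemma~\ref{a2}; the only genuinely subtle point is the identification $F\otimes IP=I(F\otimes P)$ inside $F\otimes M$, which I expect to be the main conceptual obstacle and should be spelled out explicitly, since the whole argument hinges on interpreting the ``$I$'' in the definition of $I$-primary consistently on both sides of the tensor product.
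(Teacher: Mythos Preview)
Your proposal is correct and follows essentially the same route as the paper: apply Theorem~\ref{1} to $P$, tensor the resulting colon identities through Lemma~\ref{a2}, use $F\otimes IP=I(F\otimes P)$, and invoke Theorem~\ref{1} again for $F\otimes P$. In fact you are more careful than the paper, which tacitly checks the condition only for $r\in R-\sqrt{(P:M)}$ without noting your inclusion $\sqrt{(P:M)}\subseteq\sqrt{(F\otimes P:F\otimes M)}$ that makes this sufficient, and which also uses the identification $F\otimes IP=I(F\otimes P)$ without comment.
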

	
		\begin{proof}
			Suppose that $P$ is an $I$-primary and $r \in R-\sqrt{(P:M)}$. Then by Theorem \ref{1} $(P:r)=P$ or $(P:r)=(IP:r)$. So by Lemma \ref{a2}, $(F \otimes P:r)=F \otimes (P:r)=F \otimes P$ or $(F \otimes P:r)=F \otimes (P:r)=F \otimes (IP:r)=(F \otimes IP:r)=(I(F \otimes P):r) $ and consequently $F \otimes P$ is an $I$-primary submodule of $F \otimes M$.
		\end{proof}

	\begin{prop} \label{111}
		Let $F$ be a faithfully flat $R$-module. Then a submodule $P$ of an $R$-module $M$ is $I$-primary if and only if $F \otimes P$ is an $I$-primary submodule of $F \otimes M$.
	\end{prop}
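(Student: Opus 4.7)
The plan is to handle the two directions separately. The forward implication reduces immediately to Theorem \ref{110}; the only extra point is that $F\otimes P\neq F\otimes M$. Since $P$ is proper, $M/P$ is nonzero, so by faithful flatness $F\otimes(M/P)\neq 0$, and the flatness isomorphism $F\otimes(M/P)\cong(F\otimes M)/(F\otimes P)$ forces $F\otimes P\subsetneq F\otimes M$. Theorem \ref{110} then applies.

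For the converse, assume $F\otimes P$ is $I$-primary in $F\otimes M$. Properness of $P$ follows since $P=M$ would give $F\otimes P=F\otimes M$. I would then verify the criterion in Theorem \ref{1}: given $r\in R-\sqrt{(P:M)}$, I must show $(P:r)=P$ or $(P:r)=(IP:r)$. The first sub-step is to transfer the hypothesis on $r$ to the tensor side: $r^n\in(P:M)$ is equivalent to the multiplication-by-$r^n$ map $M\to M/P$ being zero, and by faithful flatness this is equivalent to its tensor with $F$ being zero, which (via $F\otimes(M/P)\cong(F\otimes M)/(F\otimes P)$) says $r^n\in(F\otimes P:F\otimes M)$. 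Hence $r\notin\sqrt{(F\otimes P:F\otimes M)}$, and Theorem \ref{1} applied to $F\otimes P$ yields $(F\otimes P:r)=F\otimes P$ or $(F\otimes P:r)=(I(F\otimes P):r)$.

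The final step is to descend these equalities to $M$. By Lemma \ref{a2}, $(F\otimes P:r)=F\otimes(P:r)$; flatness of $F$ also identifies $I(F\otimes P)$ with $F\otimes IP$ (the natural injection $F\otimes IP\hookrightarrow F\otimes P$ has image exactly the $I$-multiples in $F\otimes P$), so Lemma \ref{a2} again gives $(I(F\otimes P):r)=F\otimes(IP:r)$. Thus $F\otimes(P:r)=F\otimes P$ or $F\otimes(P:r)=F\otimes(IP:r)$. Faithful flatness reflects equalities of submodules, since an inclusion $N_1\subseteq N_2$ in $M$ is equivalent to vanishing of the map $N_1\to M/N_2$, which is preserved and reflected by $F\otimes(-)$. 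Applying this in both directions descends to $(P:r)=P$ or $(P:r)=(IP:r)$, and Theorem \ref{1} finishes. The main obstacle is the careful but routine faithful-flatness bookkeeping in the last step, especially the identification $I(F\otimes P)=F\otimes IP$ and the reflection of submodule equality; no genuinely new idea beyond these standard manipulations is required.
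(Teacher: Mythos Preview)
Your proof is correct and follows essentially the same route as the paper: the forward direction invokes Theorem \ref{110} after checking properness via faithful flatness, and the converse descends the colon-ideal dichotomy of Theorem \ref{1} through Lemma \ref{a2} and faithful flatness. If anything, your argument is slightly more careful than the paper's at the step where one needs $r\notin\sqrt{(F\otimes P:F\otimes M)}$ (not merely $r\notin(F\otimes P:F\otimes M)$) to apply Theorem \ref{1} on the tensor side.
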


	\begin{proof}
		Suppose that $P$ is an $I$-primary submodule of an $R$-module $M$ and $F$ a faithfully flat $R$-module. If $F \otimes P=F \otimes M$, then the exactness of the sequence $ 0\longrightarrow F \otimes P \longrightarrow F \otimes M \longrightarrow 0$ imply the exactness of $0\longrightarrow P\longrightarrow M \longrightarrow 0$ and hence $P=M$ which is a contradiction. So $F \otimes P \neq F \otimes M$ and by Theorem \ref{110} $F \otimes P$ is an $I$-primary submodule of $ F \otimes M$. Conversely, let $F \otimes P$ be an $I$-primary submodule of $F \otimes M$. Hence we obtain $F \otimes P \neq F \otimes M$ and so $P \neq M$. By Lemma \ref{a2}, for every $r \in R-\sqrt{(P:M)}$ we have $r \notin (F \otimes P:F \otimes M)$ and $F \otimes (P:r)=(F \otimes P:r)=F \otimes P$ or $F \otimes (P:r)=(F \otimes P:r)=(I(F \otimes P):r)=(F \otimes IP:r)=F \otimes (IP:r)$. Suppose $F \otimes(P:r)=F \otimes P$. Then $0\longrightarrow F\otimes (P:r)\longrightarrow F \otimes P\longrightarrow 0$ is an exact sequence and as $F$ is faithfully flat, $0\longrightarrow (P:r)\longrightarrow P\longrightarrow 0$ is an exact sequence and consequently $(P:r)=P$. The other case can be proved similarly. Thus by Theorem \ref{1}, $P$ is an $I$-primary submodule of $M$.
	\end{proof}

		It is well-known that $I \otimes F \cong IF$ for any ideal $I$ of $R$ and flat $R$-module $F$. Thus by Theorem \ref{110} and  Proposition \ref{111} we conclude the following.

		\begin{cor}
			Let $F$ be a flat $R$-module and $P$ be an $I$-primary ideal of $R$ with $PF \neq F$. Then $PF$ is an $I$-primary submodule of $F$. In the case $F$ is faithfully flat, the converse is also true. 
		\end{cor}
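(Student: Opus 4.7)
The plan is to apply Theorem \ref{110} and Proposition \ref{111} with $M = R$ itself, regarded as a module over $R$, together with the standard identifications $F \otimes_R R \cong F$ and $F \otimes_R P \cong PF$ (the latter being the hint recalled immediately before the corollary).

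First I would observe that an $I$-primary ideal $P$ of $R$ is the same thing as an $I$-primary submodule of the $R$-module $R$: indeed, with $M = R$ one has $(P:_R M) = (P:_R R) = P$, so the definition $rm \in P - IP \Rightarrow r \in \sqrt{(P:M)} \text{ or } m \in P$ reduces to the usual ring-theoretic notion. Next, since $F$ is flat we have $F \otimes_R P \cong PF$ as submodules of $F \otimes_R R \cong F$. Under these identifications, the hypothesis $PF \neq F$ becomes exactly $F \otimes P \neq F \otimes M$, which is the assumption required by Theorem \ref{110}. Applying that theorem then gives that $F \otimes P \cong PF$ is an $I$-primary submodule of $F \otimes R \cong F$, which is the first assertion.

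For the converse statement, assume $F$ is faithfully flat and that $PF$ is an $I$-primary submodule of $F$. Using the same identifications, this translates to $F \otimes P$ being an $I$-primary submodule of $F \otimes R$, so Proposition \ref{111} applied to the $R$-module $M = R$ immediately yields that $P$ is an $I$-primary submodule of $R$, that is, an $I$-primary ideal. (Faithful flatness also automatically forces $PF \neq F$, since $PF = F$ would give $F \otimes P = F \otimes R$ and hence $P = R$ after tensoring back by faithful flatness, contradicting properness.)

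There is no real obstacle here; the only thing to be careful about is the bookkeeping between ``ideal'' and ``submodule of $R$'' language, and the verification that $(P:_R R) = P$ so that the radical condition $r \in \sqrt{(P:M)}$ in the submodule definition agrees with $r \in \sqrt{P}$ in the ideal definition. Everything else is a direct specialization of Theorem \ref{110} and Proposition \ref{111} to $M = R$.
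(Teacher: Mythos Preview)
Your proposal is correct and follows essentially the same route as the paper: the corollary is stated immediately after the remark that $P \otimes F \cong PF$ for a flat module $F$, and the paper simply says it follows from Theorem~\ref{110} and Proposition~\ref{111}; your argument is precisely this specialization to $M=R$, with the identification of $I$-primary ideals and $I$-primary submodules of $R$ made explicit via $(P:_R R)=P$.
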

		
		We know that every polynomial ring $R[x]$ is a flat $R$-module and that $R[x] \otimes M \cong M[x]$. Hence as an immediate consequence of the Theorem \ref{110} we give the following corollary.

		\begin{cor}
			Let $M$ be an $R$-module and $x$ an indeterminate. If $P$ is an $I$-primary submodule of $M$, then $P[x]$ is an $I$-primary submodule of $M[x]$.
		\end{cor}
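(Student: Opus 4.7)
The plan is to derive this as an immediate specialization of Theorem \ref{110} with the flat module $F = R[x]$, using the natural isomorphism $R[x]\otimes_R M \cong M[x]$ functorially in $M$.

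First I would set $F = R[x]$ and recall that $F$ is a flat (in fact free) $R$-module. Under the isomorphism $\varphi_M : R[x]\otimes_R M \to M[x]$ sending $f(x)\otimes m$ to $f(x)m$, the submodule $R[x]\otimes_R P$ corresponds to $P[x]$, since the inclusion $P \hookrightarrow M$ remains injective after tensoring with the flat module $R[x]$. Next I would check that $P[x] \neq M[x]$: since $P$ is proper in $M$, there exists $m \in M \setminus P$, and then the constant polynomial $m \in M[x]$ is not in $P[x]$; equivalently, $F\otimes P \neq F\otimes M$.

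The main compatibility I would then verify is that the isomorphism $\varphi_M$ carries $I(F\otimes P)$ to $(IP)[x]$, which equals $I\cdot P[x]$. This uses $I\otimes_R F \cong IF$ (hence $I\cdot(F\otimes P) \cong F\otimes IP$ because $F$ is flat) together with the remark preceding the corollary; concretely, an element of $I\cdot P[x]$ is a finite sum $\sum a_j p_j(x)$ with $a_j\in I$, $p_j(x)\in P[x]$, which matches the image of $\sum a_j(1\otimes p_j)$ under $\varphi_M$. With this identification, $P[x]$ being $I$-primary in $M[x]$ means exactly that $F\otimes P$ is $I$-primary in $F\otimes M$.

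Having gathered these ingredients, the conclusion follows directly from Theorem \ref{110}: since $P$ is $I$-primary in $M$, $F = R[x]$ is flat, and $F\otimes P \neq F\otimes M$, the submodule $F\otimes P$ is $I$-primary in $F\otimes M$, which under $\varphi_M$ translates to $P[x]$ being $I$-primary in $M[x]$. No obstacle of any real depth arises here; the only point requiring a moment of care is the bookkeeping identification $I(F\otimes P) \cong (IP)[x]$, so the proof can be presented in one or two sentences once that identification is noted.
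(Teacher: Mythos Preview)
Your proposal is correct and follows exactly the route the paper takes: the corollary is stated as an immediate consequence of Theorem \ref{110} applied with the flat $R$-module $F=R[x]$, using the natural identification $R[x]\otimes_R M\cong M[x]$. You simply spell out the routine bookkeeping (properness of $P[x]$ and the identification $I(F\otimes P)\cong (IP)[x]=I\cdot P[x]$) that the paper leaves implicit.
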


%\ack % or \acks
% Put acknowledgements here

% alteratively, bibliographies prepared with BibTeX can be included by
% means of the following commands
%\bibliographystyle{srtnumbered}
%\bibliography{mybib}

\begin{thebibliography}{99}
              \bibitem{11-1} Akray, I., $I$-prime ideals, to be appear.
             \bibitem{22-2} Akray, I. and Hussein, H. S., $I$-prime submodules, to be appear.

            \bibitem{34} Ali, M., Multiplication modules and homogeneous idealization. II. Beitr Algebra Geom, 2007, 48: 321 - 343 
            
            \bibitem{123} Ameri, R., On the prime submodules of multiplication modules, Inter. J. Math. Math. Sci., 27 (2003), 1715 - 1724. 
            
            \bibitem{6-6} Anderson, D. D., Smith, E., Weakly prime ideals. Houston J. Math. 29 (2003), 831 - 840. 
            
            % % %\bibitem{11} Ashour, A. E., On Weakly primary submodules, Journal of Al-Azhar University-Gaza (Natural Sciences), Vol.13 (2011), 31 - 40.
       		
       		% % %\bibitem{22} Azizi, A. Prime submodules and flat modules, Acta Math. Sinica, English Series, No.1 (2007), 147 - 152.
       	    \bibitem{114} Baziar M. and Behboodi M., Classical primary submodules and decomposition of modules. J Algebra Appl, 8(3) (2009), 351 - 362 
       	
       		\bibitem{124} Behboodi M., Classical prime submodules [PhD Thesis]. Ahvaz: Chamran University, Iran, 2004.
       		\bibitem{134} Behboodi M, Koohy H. Weakly prime modules. Vietnam J Math, 32(3) (2004), 303 - 317 
       		
       		\bibitem{5-5} Ebrahimi Atani, S., Farzalipour, F., On weakly primary ideals, Georgian Math. J. 12 (2005), 423 - 429.
       		\bibitem{1234} El-Bast, Z. A. and Smith, P. F., Multiplication modules, Comm. Algebra, 16 (1988), 755 - 779. 
       		
       		% % %\bibitem{44} Hani A. K., On almost prime submodules, Acta Mathematica Scientia, 32(2) (2012), 645 - 651.
       	%	\bibitem{55} Khaksari, A. and Jafari, A., φ-Prime Submodules, International Journal of Algebra, Vol. 5, no. 29 (2011), 1443 - 1449.
       	    
       	    
       	     \bibitem{164} Lu C. Prime submodules of modules. Comment Math Univ St Paul, 33(1) (1984), 61 - 69 
       		% % %\bibitem{66} MUSTAFA, A. AND YUCEL, T., On prime submodules, Rocky Mountain Journal of Mathematics V. 37, No. 3 (2007), 709 - 722.
       	%	\bibitem{77} NASER, Z., φ-Prime Submodules, Glasgow Mathematical Journal, (2010), 1 - 8.
           
           
           	% % %\bibitem{88} Wang, L. and Yang, S., On almost primary submodules, Journal of Lanzhou University (Natural Sciences), Vol. 49 No. 3 (2013), 395 - 399.
\end{thebibliography}

\end{document}